\theoremstyle{plain}
\newtheorem{theorem}{Theorem}[section]
\newtheorem{lemma}{Lemma}[section]
\theoremstyle{definition}
\newtheorem{remark}{Remark}[section]
\newcommand{\R}{\mathbb{R}}
\newcommand{\Rext}{\mathbb{R}\cup\{+\infty\}}
\newcommand{\Hbs}{\mathcal{H}}
\newcommand{\Ball}{\mathbb{B}}
\newcommand{\C}{\mathbb{C}}
\newcommand{\Id}{\mathbb{I}}
\newcommand{\abs}[1]{\vert#1\vert}
\newcommand{\set}[1]{\left\{#1\right\}}
\newcommand{\norm}[1]{\Vert#1\Vert}
\newcommand{\multito}{\rightrightarrows}
\newcommand{\weakto}{\rightharpoonup}
\DeclareMathOperator*{\argmin}{argmin}
\DeclareMathOperator{\proj}{proj}
\DeclareMathOperator{\sign}{sign}
\DeclareMathOperator{\real}{Re}
\DeclareMathOperator{\rank}{rank}
\newcommand{\Eproof}{\hfill $\square$}
\newcommand{\iprod}[1]{\left\langle #1\right\rangle}
\newcommand{\iprods}[1]{\langle #1\rangle}
\title{Non-stationary Douglas-Rachford and alternating direction method of  multipliers: adaptive stepsizes and convergence\thanks{This material was based upon work partially supported by the National Science Foundation under Grant DMS-1127914 to the Statistical and Applied Mathematical Sciences Institute. 
Any opinions, findings, and conclusions or recommendations expressed in this material are those of the author(s) and do not necessarily reflect the views of the National Science Foundation.
The work of Q. Tran-Dinh was partially supported by the NSF grant, no. DMS-161984.
}}
\author{Dirk A. Lorenz
   \thanks{Institute of Analysis and Algebra, TU Braunschweig, 38092 Braunschweig, Germany,
   (\href{mailto:d.lorenz@tu-braunschweig.de}{d.lorenz@tu-braunschweig.de}).}
  \and Quoc Tran-Dinh
  \thanks{Department of Statistics and Operations Research,
   University of North Carolina at Chapel Hill,
   Chapel Hill, NC (\href{mailto:quoctd@email.unc.edu}{quoctd@email.unc.edu}).}
}
\begin{document}
\maketitle

\begin{abstract}
We revisit the classical Douglas-Rachford (DR) method for finding a zero of the  sum of two maximal monotone operators.  
Since the practical performance of  the DR method  crucially depends on the stepsizes, we aim at developing an adaptive stepsize rule.  
To that end, we  take a closer look at a linear case of the problem and use our  findings to develop a stepsize strategy that eliminates the need for stepsize tuning.  
We analyze a general non-stationary DR scheme  and prove its convergence for a convergent sequence of stepsizes with summable increments in the case of maximally monotone operators.
This, in turn, proves the convergence of the method with the new adaptive stepsize rule.  
We also derive the related non-stationary alternating direction method of multipliers (ADMM)
We illustrate the efficiency of the proposed methods on several numerical examples.

\end{abstract}

\textbf{Keywords:}
  Douglas-Rachford method, alternating direction methods of multipliers, maximal monotone inclusions, adaptive stepsize, non-stationary iteration

\textbf{AMS:}
    90C25, 65K05, 65J15, 47H05

\section{Introduction}\label{sec:intro}
In this paper we consider the Douglas-Rachford (DR) method to solve the problem of finding a zero of the sum of two maximal monotone operators, i.e., solving:
\begin{equation}\label{eq:mono_inc}
0\in (A+B)x,
\end{equation}
where $A,B : \Hbs \multito \Hbs$ are two (possibly multivalued) maximal monotone operators from a Hilbert space $\Hbs$ into itself~\cite{rockafellar1976monotone}.

The DR method originated from~\cite{douglas1956numerical} and was initially proposed to solve the discretization of stationary and non-stationary heat equations 
where the involved monotone operators are linear (namely, the discretization of second derivatives in different spatial directions, 
for example $A\approx -\partial_{x}^{2}$ and $B\approx -\partial_{y}^{2}$). 
The iteration uses resolvents $J_{A} = (\Id + A)^{-1}$ ($\Id$ is the identity map) and $J_B = (\Id + B)^{-1}$, and from the original paper~\cite[Eq. (7.4), (7.5)]{douglas1956numerical} one can extract the iteration
\begin{equation}\label{eq:DR-iter-1}
u^{n+1} := J_{tB}\left( J_{tA}((\Id - tB)u^{n}) + tBu^{n} \right),
\end{equation}
where $t > 0$ is a given stepsize. 
This iteration scheme also makes sense for general maximal monotone operators as soon as $B$ is single-valued.
It has been observed in~\cite{lions1979splitting} that the iteration can be rewritten for arbitrary maximally monotone operators by substituting $u := J_{tB}y$ and using the identity
\begin{equation}
  tBJ_{tB}y = tB(\Id + tB)^{-1}y = y - (\Id + tB)^{-1}y = y - J_{tB}y\label{eq:resolvent-By}
\end{equation}
 to get
\begin{equation}\label{eq:DR-iter-2}
 y^{n+1} := y^n + J_{tA}\left( 2J_{tB}y^{n} - y^{n} \right) - J_{tB}y^{n}.
\end{equation}
Comparing \eqref{eq:DR-iter-1} and \eqref{eq:DR-iter-2}, we see that \eqref{eq:DR-iter-2} does not require to evaluate $Bu$, which avoids assuming that $B$ is single-valued as in \eqref{eq:DR-iter-1}. 
Otherwise, $u^{n+1}$ is not uniquely defined.
While $\set{u^n}$ in \eqref{eq:DR-iter-1} converges to a solution $x^{\ast}$ of \eqref{eq:mono_inc}, $\set{y^n}$ in \eqref{eq:DR-iter-2} is just an intermediate sequence converging to $y^{\ast}$ such that $u^{\ast} = (\Id + tB)^{-1}y^{\ast}$ is a solution of \eqref{eq:mono_inc}. Therefore, \eqref{eq:DR-iter-1} gives us a convenient form to study the DR method in the framework of fixed-point theory.
Note that the iterations~\eqref{eq:DR-iter-1} and~\eqref{eq:DR-iter-2} are equivalent in the stationary case, but they are not equivalent in the non-stationary case, i.e., when the stepsize $t$ varies along the iterations; we will shed more light on this later in Section~\ref{subsec:DR_scheme2}.

From a practical point of view, the performance of a DR scheme mainly depends on the following two aspects:
\begin{itemize}
\item \textit{Good stepsize $t$:}
It seems to be generally acknowledged that the choice of the stepsize is crucial for the algorithmic performance of the method but a general rule to choose the stepsize seems to be missing \cite{bauschke2017convex,eckstein1992douglas}.
So far, convergence theory of DR methods provides some theoretical guidance to select the parameter $t$ in a given range in order to guarantee convergence of the method.
Such a choice is often globally fixed for all iterations, and does not take into account  local structures of the underlying operators $A$ and $B$.
Moreover, the global convergence rate of the DR method is known to be $\mathcal{O}(1/n)$ under only monotonicity assumption, but often using an averaging sequence \cite{Davis2014,davis2016convergence,he20121}, where $n$ is the iteration counter.
Several experiments have shown that DR methods have better practical rate than its theoretical bound \cite{patrinos2014douglas} by using the last iterate (i.e. not an averaging sequence).
In the special case of convex optimization problems, the Douglas-Rachford method is equivalent to the alternating direction methods of multipliers (ADMM) (see, e.g. the recent~\cite{glowinski2014alternating} for a short historical account) and there a several proposals for dynamic stepsizes for ADMM~\cite{he2000alternating,lin2011linearized,song2016fast,xu2016adaptive} but we are not aware of a method that applies to DR in the case of monotone operators.
The recent work~\cite{moursi2018douglas} provides explicit choices for constant stepsizes in cases where the monotone operator posses further properties.

\item \textit{Proper metric:}
Since the DR method is not invariant as the Newton method, the choice of metric and preconditioning seems to be crucial to accelerate its performance.
Some researchers have been studying this aspect from different views, see, e.g., \cite{bredies2016accelerated,bredies2015preconditioned,giselsson2014diagonal,giselsson2017linear,he2012convergence,pock2011diagonal}.
Clearly, the choice of a metric and a preconditioner also affects the choice of the stepsize. 

\end{itemize}

Note that a metric choice often depends on the variant of methods, while the choice of stepsize depends on the problem structures such as the strongly monotonicity parameters and the Lipschitz constants~\cite{moursi2018douglas}.
In general cases, where $A$ and $B$ are only monotone, we only have a general rule to select the parameter $t$ to obtain its sublinear convergence rate \cite{davis2016convergence,eckstein1992douglas,he20121}.
This stepsize depends on the mentioned global parameters only and does not adequately capture the local structure of $A$ and $B$ to adaptively update $t$.
For instance, a linesearch procedure to evaluate a local Lipschitz constant for computing stepsize in first-order methods can beat the optimal stepsize using  global Lipschitz constant \cite{Becker2011a}, or a Barzilai-Borwein stepsize in gradient descent methods essentially exploits local curvature of the objective function to obtain a good performance.

\textbf{Our contribution:}
We prove the convergence of a new version of the non-stationary Douglas-Rachford method for the case where both operators are merely maximally monotone.
Moreover, we propose a very simple adaptive step-size rule and demonstrate that this rule does improve convergence in practical situations.
We also transfer our results to the case of ADMM and obtain a new adaptive rule that outperforms previously known adaptive ADMM methods and also does have a convergence guarantee.
Our step-size rule is relatively simple and does not incur significantly computational effort rather than the norm of two vectors. Our stepsize rule has a theoretical convergence guarantee.

\textbf{Paper organization:}
We begin with an analysis of the case of linear monotone operators in section~\ref{sec:analysis-linear}, analyze the convergence of the non-stationary form of the iteration~\eqref{eq:DR-iter-1}, i.e. the form where $t = t_{n}$ varies with $n$, in section~\ref{sec:conv-non-stationary-dr}, and then propose adaptive stepsize rules in section~\ref{sec:adaptive-stepsize}. 
Section~\ref{sec:admm} extends the analysis to non-stationary ADMM. 
Finally, section~\ref{sec:numerical-experiments} provides several numerical experiments for the DR scheme and ADMM using our new stepsize rule.

\subsection{State of the art}\label{sec:state-ofthe-art}
There are several results on the convergence of the iteration~\eqref{eq:DR-iter-2}. 
The seminal paper~\cite{lions1979splitting} showed that, for any positive stepsize $t$, the iteration map
in~\eqref{eq:DR-iter-2} is firmly nonexpansive, that the sequence $\set{y^{n}}$ weakly converges  to a fixed point of the iteration
map~\cite[Prop. 2]{lions1979splitting} and that, $\set{u^{n} = J_{tB}y^{n}}$ weakly converges  to a solution of the inclusion \eqref{eq:mono_inc} as soon as $A$, $B$ and $A+B$ are maximal monotone~\cite[Theorem 1]{lions1979splitting}.  
In the case where $B$ is coercive and Lipschitz continuous, linear convergence was also shown in \cite[Proposition 4]{lions1979splitting}.
These results have been extended in various ways.
Let us attempt to summarize some key contributions on the DR method.
Eckstein and Bertsekas in \cite{eckstein1992douglas} showed that the DR scheme can be cast into a special case of the proximal point method \cite{rockafellar1976monotone}.
This allows the authors to exploit inexact computation from the proximal point method \cite{rockafellar1976monotone}. 
They also presented a connection between the DR method and the alternating direction method of multipliers (ADMM).
In \cite{svaiter2011weak} Svaiter proved a weak convergence of the DR method in Hilbert space without the assumption that $A+B$ is maximal monotone and the prove have been simplified in~\cite{svaiter2018simplified}.
Combettes \cite{combettes2004solving} cast the DR method as special case of the averaging operator from a fixed-point framework.
Applications of the DR method have been studied in \cite{combettes2007douglas}.
The convergence rate of the DR method was first studied in \cite{lions1979splitting} for the strongly monotone case, while the sublinear rate was then proved in \cite{he20121}.
A more intensive research on convergence rates of the DR methods can be found in \cite{Davis2014,davis2016convergence,moursi2018douglas,nishihara2015general}.
The DR method has been extended to accelerated variant in \cite{patrinos2014douglas} but specifying for a special setting.
In~\cite{Liang2017} the authors analyzed a non-stationary DR method derived from~\eqref{eq:DR-iter-2} in the framework of perturbations of non-expansive iterations and showed convergence for convergent stepsize sequences with summable errors.

The DR method together with its dual variant, ADMM, become extremely popular in recent years due to a wide range of applications in image processing, and machine learning \cite{becker2013algorithm,li2015proximal}, which are unnecessary to recall them all here.

In terms of stepsize selection for DR schemes as well as for ADMM methods, it seems that there is very little work available from the literature. 
Some general rules for fixed stepsizes based on further properties of the operators such as strong monotonicity, Lipschitz continuity, and coercivity are given in~\cite{giselsson2017tightglobalrates,moursi2018douglas}, and it is shown that the resulting linear rates are tight.
Heuristic rules for fixed stepsizes motivated by quadratic problems are derived in~\cite{giselsson2017linear}.
A self-adaptive stepsize for ADMM proposed in \cite{he2000alternating} seems to be one of the first work in this direction.
The recent works~\cite{xu2016adaptive,xu2017adaptive} also proposed an adaptive update rule for stepsize in ADMM based on a spectral estimation.
Some other papers rely on theoretical analysis to choose optimal stepsize such as \cite{ghadimi2015optimal}, but it only works in the quadratic case.
In \cite{lin2011linearized}, the authors proposed a nonincreasing adaptive rule for the penalty parameter in ADMM. 
Another update rule for ADMM can be found in \cite{song2016fast}.
While ADMM is a dual variant of the DR scheme, we unfortunately have not seen any work that converts such an adaptive step-size from ADMM to the DR scheme where the more general case of monotone operators can be handled.
In addition, the adaptive step-size for the DR scheme by itself seems to not exist in the literature.

\subsection{A motivating linear example}
\label{sec:motivating-example}
While the Douglas-Rachford iteration (weakly) converges  for all positive stepsizes $t > 0$, it seems to be folk wisdom, that there is a ``sweet spot'' for the stepsize which leads to fast convergence. 
We illustrate this effect with a simple linear example. 
We consider a linear equation
\begin{equation}\label{eq:linear_exam}
0 = Ax+Bx,
\end{equation}
where $A,B\in\R^{m\times m}$ are two matrices of the size $m\times m$ with $m=200$. 
We choose symmetric positive definite matrices with $\rank(A) = \tfrac{m}2+10$ and $\rank{B} = \tfrac{m}2$ such that $A+B$ has full rank, 
and thus the equation $0 = Ax + Bx$ has zero as its unique solution.\footnote{The exact construction of $A$ and $B$ is $A = C^{T}C$ and $B = D^{T}D$, where $C\in\R^{(0.5m + 10)\times m}$ and $D\in\R^{0.5m\times m}$ are drawn from the standard Gaussian distribution in Matlab.} 
Since $B$ is single-valued, we directly use the iteration~\eqref{eq:DR-iter-1}.

\begin{remark}\label{rem:linear-shift}
Note that the shift $\tilde B x= Bx-y$ would allow to treat inhomogeneous equation $(A+B)x = y$. 
If $x^{*}$ is a solution of this equation, then one sees that iteration~\eqref{eq:DR-iter-1} applied to $A+\tilde B$ is equivalent to applying the iteration to $A+B$ but for the residual $x-x^{*}$.
\end{remark}

We ran the DR scheme \eqref{eq:DR-iter-1} for a given range of different values of $t > 0$, and show the residuals $\norm{(A+B)u^{n}}$ in semi-log-scale on the left of Figure~\ref{fig:motivating-example-residual}.  
One observes the following typical behavior for this example: 
\begin{itemize}
\item A not too small stepsize ($t=0.5$ in this case) leads to good progress in the beginning, but slows down considerably in the end. 
\item Large stepsizes (larger than $2$ in this case) are slower in the beginning and tend to produce non-monotone decrease of the error.
\item Somewhere in the middle, there is a stepsize which performs much better than the small and large stepsizes.
\end{itemize}
In this particular example the stepsize $t=1.5$  greatly outperforms the other stepsizes.
On the right of Figure~\ref{fig:motivating-example-residual} we show the norm of the residual after a fixed number of iterations for varying stepsizes.
One can see that there is indeed a sweet spot for the stepsizes around $t=1.5$.
Note that the value of $t=1.5$ is by no means universal and this sweet spot of $1.5$ varies with the problem size, with the ranks of $A$ and $B$, and even for each particular instance of this linear example.

\begin{figure}[htp!] 
\begin{center}
 \includegraphics[width=0.49\textwidth]{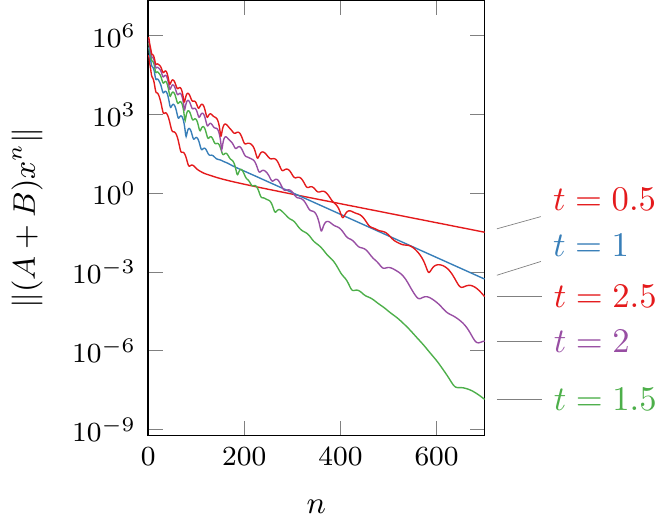}~%
 \includegraphics[width=0.49\textwidth]{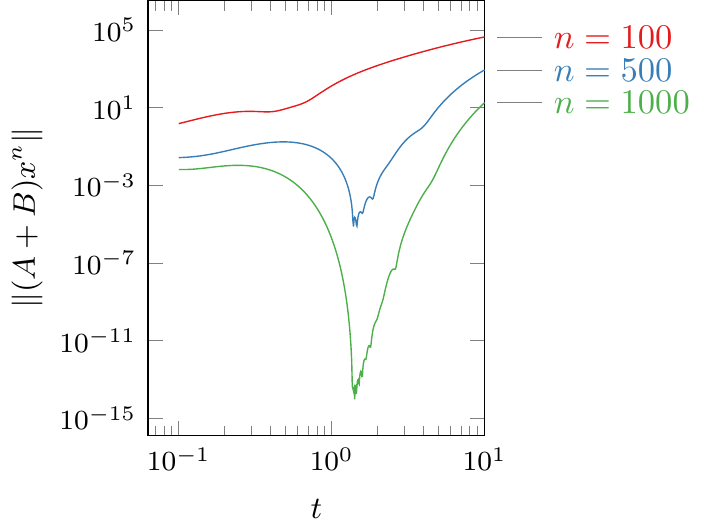}
 \vspace{-1ex}
 \caption{The residual of the Douglas-Rachford scheme \eqref{eq:DR-iter-1} for the linear example. 
 Left: The dependence of the residual on the iterations with different stepsizes. 
 Right: The dependence of the residual on the the stepsize with different numbers of iterations.}
 \label{fig:motivating-example-residual}
 \end{center}
 \vspace{-2ex}
\end{figure}

\section{Analysis of the linear monotone inclusion}\label{sec:analysis-linear}
In order to develop an adaptive stepsize for our non-stationary DR method, we first consider the linear problem instance of \eqref{eq:mono_inc}.
We consider the original DR scheme \eqref{eq:DR-iter-1} instead of \eqref{eq:DR-iter-2} since \eqref{eq:DR-iter-1} generates the sequence $\set{u^n}$ which converges to a solution of \eqref{eq:mono_inc}, while the sequence $\set{y^n}$ computed by \eqref{eq:DR-iter-2} does not converge to a solution and  its limit does depend on the stepsize in general.

\subsection{The construction of adaptive stepsize for single-valued operator $B$}
\label{subsec:adaptive-1}
When both $A$ and $B$ are linear, the DR scheme~\eqref{eq:DR-iter-1} can be expressed as a fixed-point iteration scheme of the following mapping:
\begin{equation}\label{eq:linear-iteration-map}
\begin{split}
H_{t} & := J_{tB}\left( J_{tA}(\Id - tB) + tB\right)\\
         & = (\Id+tB)^{-1}(\Id + tA)^{-1}(\Id + t^{2}AB)\\
         & = (\Id + tA + tB + t^{2}AB)^{-1}(\Id + t^{2}AB).    
\end{split}
\end{equation}
Recall that, by Remark~\ref{rem:linear-shift}, all of this section also applies not only to problem $(A+B)x=0$ but also problem $(A+B)x = y$.
The notion of a monotone operator has a natural equivalence for matrices, which is, however, not widely used. 
Hence, we recall that
a matrix $A\in\R^{m\times m}$ is called \emph{monotone}, if, for all $x\in\R^{m}$, it holds that $\iprod{x,Ax} \geq 0$.
Note that any symmetric positive semidefinite (spd) matrix is monotone, but a monotone matrix is not necessarily spd.
Examples of a monotone matrices that are not spd are
\begin{equation*}
A = \begin{bmatrix}
  0 & -1\\
  1 & 0
\end{bmatrix},\quad\text{and}\quad
A = \begin{bmatrix}
  1 & t\\
  0 & 1
\end{bmatrix}\ 
~~\text{with}~\abs{t}\leq 2.
\end{equation*}
The first matrix is skew symmetric, i.e., $A^T = -A$ and any such matrix is monotone.
Note that even if $A$ and $B$ are spd (as in our example in Section~\ref{sec:motivating-example}), the iteration map $H_{t}$ in \eqref{eq:linear-iteration-map} is not even symmetric.
Consequently, the asymptotic convergence rate of the iteration scheme~\eqref{eq:DR-iter-1} is not governed by the norm of $H_{t}$ but by its \emph{spectral radius} $\rho(H_{t})$, which is the largest magnitude of an eigenvalue of $H_{t}$ (cf.~\cite[Theorem 11.2.1]{golub2013matrix}). 
Moreover, the eigenvalues and eigenvectors of $H_{t}$ are complex in general. 

First, it is clear from the derivation of $H_{t}$ that the eigenspace of $H_t$ for the eigenvalue $\lambda=1$ exactly consists of the solutions of $(A+B)x = 0$.

In the following, for any $z\in\C$ (the set of complex numbers) and $r > 0$, we denote by $\Ball_{r}(z)$ the ball of radius $r$ centered at $z$.
We estimate the eigenvalues of $H_{t}$ that are different from $1$.

\begin{lemma}\label{lemma:estimate-ev-Ht}
Let $A,B\in\R^{n\times n}$ be monotone, and $H_{t}$ be defined by~\eqref{eq:linear-iteration-map}. 
Let $\lambda\in \C$ be an eigenvalue of $H_{t}$ with corresponding eigenvector $z\in\C^{n}$. 
Assume that $\lambda\neq 1$ and define $c$ by
\begin{equation}\label{eq:c-estimate-evs}
c := \frac{\real(\iprod{Bz,z})}{t^{-1}\norm{z}^{2} + t\norm{Bz}^{2}}.
\end{equation}
Then, we have $c\geq 0$ and 
\begin{equation*}
\left\vert{\lambda-\frac12}\right\vert \leq  \sqrt{\frac14 - \frac{c}{1+2c}}\leq \frac12,
\end{equation*}
i.e. $\lambda\in \Ball_{\frac12}\left(\frac12\right)$.
\end{lemma}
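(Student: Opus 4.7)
The plan is to reduce the eigenvalue equation $H_t z = \lambda z$ to two coupled linear relations by unwinding the resolvents, then extract the bound on $|\lambda - 1/2|$ by taking a single inner product that activates monotonicity of $A$, combined with a clean algebraic identity tying $|\lambda - 1/2|^{2}$ to $\real \lambda - |\lambda|^{2}$.

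First, I would dispatch the nonnegativity of $c$: writing $z = z_R + i z_I$ and using that $B$ is real, one has $\real\iprod{Bz,z} = \iprod{Bz_R, z_R} + \iprod{Bz_I,z_I} \geq 0$ by monotonicity of $B$, while the denominator is strictly positive since $z \neq 0$. (As a side observation that will be consistent with the later bound, AM--GM even gives $c \leq 1/2$.)

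Next, I would peel off the resolvents in $H_t = J_{tB}(J_{tA}(\Id - tB) + tB)$. Introducing $w := J_{tA}(\Id - tB) z$, so that $(\Id + tA)w = z - tBz$, the equation $J_{tB}(w + tBz) = \lambda z$ becomes $w + tBz = \lambda z + \lambda tBz$, which rearranges to
\begin{equation*}
w = \lambda z + (\lambda - 1)\, tBz, \qquad tAw = (1-\lambda)\, z - \lambda\, tBz.
\end{equation*}
The assumption $\lambda \neq 1$ is what makes $w$ a nontrivial combination; I would note this explicitly so the argument stays clean.

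The decisive step is to pair the second relation with $w$ via the complex inner product (antilinear in the first slot). Expanding using $N := \norm{z}^{2}$, $M := t^{2}\norm{Bz}^{2}$, and $\alpha := t\iprod{z, Bz}$, and collecting real parts via the elementary identities $\real[(1-\lambda)\bar\lambda] = \real\lambda - |\lambda|^{2}$, $(1-\lambda)(\bar\lambda - 1) = -|1-\lambda|^{2}$, and $\real[\lambda(\bar\lambda - 1)] = |\lambda|^{2} - \real\lambda$, I expect to arrive at
\begin{equation*}
\real\iprod{w, tAw} = (\real\lambda - |\lambda|^{2})(N + M) - (|1-\lambda|^{2} + |\lambda|^{2})\, \real\alpha.
\end{equation*}
Monotonicity of $A$ then forces the right-hand side to be nonnegative; this is the step where the actual bookkeeping of real and imaginary parts is the only thing that could go wrong, and I would do it slowly.

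Finally, I would convert this inequality to the geometric statement using the two clean identities
\begin{equation*}
\real\lambda - |\lambda|^{2} = \tfrac{1}{4} - \bigl|\lambda - \tfrac12\bigr|^{2}, \qquad |1-\lambda|^{2} + |\lambda|^{2} = \tfrac{1}{2} + 2\bigl|\lambda - \tfrac12\bigr|^{2},
\end{equation*}
dividing through by $N + M > 0$ and substituting $c = \real\alpha/(N+M)$. Setting $y := |\lambda - \tfrac12|^{2}$, the inequality reads $\tfrac14 - y \geq (2y + \tfrac12) c$, which rearranges to $y \leq \tfrac{1/4 - c/2}{1 + 2c} = \tfrac14 - \tfrac{c}{1 + 2c}$. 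Taking square roots gives the first bound, and since $c \geq 0$ implies $c/(1+2c) \geq 0$, the second bound $\sqrt{1/4 - c/(1+2c)} \leq 1/2$ is immediate, placing $\lambda$ in $\Ball_{1/2}(1/2)$.
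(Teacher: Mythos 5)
Your proposal is correct and follows essentially the same route as the paper: unwinding the resolvents yields exactly the paper's key relation $tA(\lambda z+(\lambda-1)tBz)=(1-\lambda)z-\lambda tBz$, after which both arguments apply monotonicity of $A$ to the real part of the same inner product and perform equivalent algebra (the paper in coordinates $\lambda=x+iy$, you via the identities $\real\lambda-\abs{\lambda}^2=\tfrac14-\abs{\lambda-\tfrac12}^2$ and $\abs{1-\lambda}^2+\abs{\lambda}^2=\tfrac12+2\abs{\lambda-\tfrac12}^2$). Your bookkeeping checks out, including the side remark $c\leq\tfrac12$.
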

\begin{proof}
Note that for a real, linear, and monotone map $M$, and a complex vector $a = b+ic$, it holds that $\iprod{Ma,a} = \iprod{Mb,b} + \iprod{Mc,c} + i\iprod{(M^{T}-M)b, c}$ and thus, $\real(\iprod{Ma,a})\geq 0$. This shows that $c\geq 0$.

We can see from \eqref{eq:linear-iteration-map} that any pair $(\lambda,z)$ of eigenvalue and eigenvector of $H_t$ fulfills
\begin{equation*}
  z+t^{2}ABz = \lambda(z+tAz+tBz+t^{2}ABz).
\end{equation*}
Now, if we denote $u := Bz$, then this expression becomes
\begin{equation*}
z + t^{2}Au = \lambda z + \lambda tAz + \lambda t u + \lambda t^{2}Au,
\end{equation*}
 which, by rearranging, leads to
\begin{equation*}
  -(\lambda-1)z - \lambda t u = tA(\lambda z + (\lambda-1)tu).
\end{equation*}
Hence, by monotonicity of $tA$, we can derive from the above relation that
\begin{align*}
0 & \leq \real(\iprod{\lambda z + (\lambda-1)tu,-(\lambda-1)z - \lambda tu}\label{eq:A-monotone})\\
   & = -\real(\lambda(\bar\lambda-1))\norm{z}^{2} - (\abs{\lambda}^{2} + \abs{\lambda-1}^{2})t\real(\iprod{u,z}) - \real((\lambda-1)\bar\lambda) t^{2}\norm{u}^{2}.
\end{align*}
This leads to
\begin{equation*}
(\abs{\lambda}^{2} + \abs{\lambda-1}^{2})\real(\iprod{u,z}) \leq \frac{\real(\lambda - \abs{\lambda}^{2})}{t} \norm{z}^{2} + \real((\bar\lambda - \abs{\lambda}^{2}))t\norm{u}^{2}.
\end{equation*}
Denoting $\lambda := x+iy \in \C$, the last expression reads as 
\begin{equation*}
(x^{2}+(x-1)^{2}+2y^{2})\real(\iprod{u,z})\leq (x-x^{2}-y^{2})\Big(\frac{\norm{z}^{2}}t + t\norm{u}^{2}\Big).
\end{equation*}
Recalling the definition of $c$ in \eqref{eq:c-estimate-evs}, we get 
\begin{equation*}
(x^{2}+(x-1)^{2}+2y^{2}) c\leq x-x^{2}-y^{2}.
\end{equation*}
This is equivalent to
\begin{align*}
0  \leq x-x^{2}-y^{2} - cx^{2}-c(x-1)^{2}-2cy^{2} = (1+2c)(x-x^{2}-y^{2}) - c,
\end{align*}
which, in turn, is equivalent to
$
x^{2}-x + y^{2}\leq - \tfrac{c}{1+2c}$.
Adding $\tfrac{1}{4}$ to both sides, it leads to
$
(x-\tfrac12)^{2}+y^{2}\leq \tfrac14 - \tfrac{c}{1+2c}$,
which shows the desired estimate.
\end{proof}

In general, the eigenvalues of $H_{t}$ depend on $t$ in a complicated way. 
For $t=0$, we have $H_{0} =  \Id$ and hence, all eigenvalues are equal to one. 
For growing $t > 0$, some eigenvalues move into the interior of the circle $\Ball_{1/2}(1/2)$ and for $t\to \infty$, it seems that all eigenvalues tend to converge to the boundary of such a circle, see Figure~\ref{fig:evs-estimate-illustration} for an illustration of eigenvalue distribution.

\begin{figure}[htb]
\begin{center}
\includegraphics[width=0.5\textwidth]{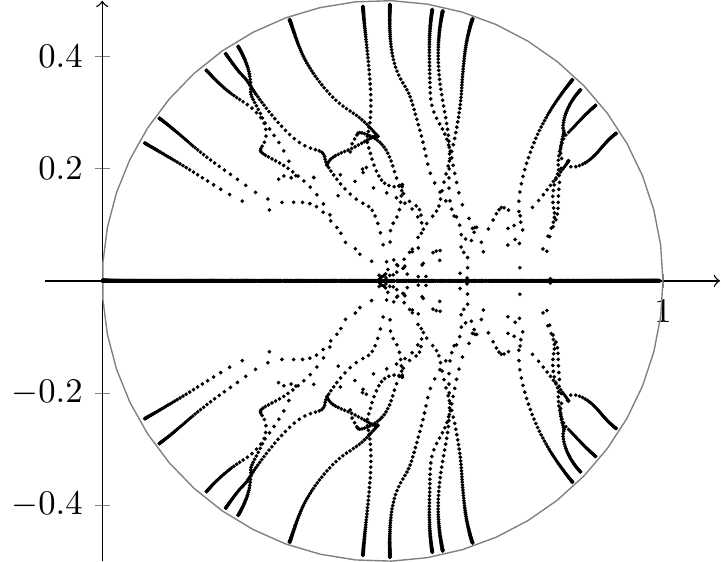}
\caption{Eigenvalues of $H_t$ for different values of $t$ for a linear example similar to the example \eqref{eq:linear_exam} in Section~\ref{sec:motivating-example} $($but with $m=50$$)$.}\label{fig:evs-estimate-illustration}
\end{center}
\vspace{-2ex}
\end{figure}

\begin{remark}\label{re:firmlynonexpansiveness}
It appears that Lemma~\ref{lemma:estimate-ev-Ht} is related to
Proposition 4.10 of~\cite{Bauschke2012} and also to the fact that
the iteration mapping $H_{t}$ is (in the general nonlinear case) known to be not only non-expansive, but \emph{firmly non-expansive}~(cf.~\cite[Lemma 1]{eckstein1992douglas} and \cite[Figure 1]{eckstein1992douglas}).  
In general, firmly non-expansiveness allows over-relaxation of the method, and
indeed, one can also easily see this in the linear case as well: 
If $\lambda$ is an eigenvalue of $H_{t}$, then it lies in $\Ball_{1/2}(1/2)$ (when it is not equal to one) and the corresponding eigenvalue $\lambda_{\rho}$ of the relaxed   iteration map
\begin{equation*}
H_{t}^{\rho} = (1-\rho)\Id + \rho H_{t}
\end{equation*}
is $\lambda_{\rho} = 1-\rho + \rho\lambda$ and lies in $\Ball_{\rho/2}(1-\tfrac\rho2)$. 
Therefore, for  $0\leq \rho\leq 2$ all eigenvalues different from one of the  relaxed iteration
\begin{equation*}
u^{n+1} = (1-\rho) u^{n} + \rho H_{t}u^{n}
\end{equation*}
lie in a circle of radius $\rho/2$ centered at $1-\rho/2$, and hence, the iteration  is still non-expansive. 
It is know that relaxation can speed up convergence, but we will not investigate this in this paper.
\end{remark}

Lemma \ref{lemma:estimate-ev-Ht} tells us a little more than that all eigenvalues of the iteration map $H_{t}$ lie in a circle centered at $\tfrac12$ of
radius $\tfrac12$. 
Especially, all eigenvalues except for $\lambda=1$ have
magnitude strictly smaller than one if $\real(\iprod{Bz,z})>0$ for all corresponding eigenvectors $z$. This implies
that the iteration map $H_{t}$ is indeed asymptotically contracting outside the set of solutions $\set{x^{\ast} \in\Hbs \mid (A+B)x^{\ast} =0}$ of \eqref{eq:mono_inc}. This proves that the
stationary iteration $u^{n+1} = H_{t}u^{n}$ converges to a zero point of the map $A+B$ at a linear rate.
Note that this does not imply the convergence in the non-stationary case.

To optimize the convergence speed, we aim at minimizing the spectral radius of $H_t$, which is the magnitude of the largest eigenvalue of $H_{t}$ and there seems to be little hope to  explicitly minimize this quantity.

Here is a heuristic argument based on Lemma~\ref{lemma:estimate-ev-Ht}, which we will use to derive an adaptive stepsize rule: 
Note that $c\mapsto \tfrac{c}{1+2c}$ is increasing and hence, to minimize the upper bound on $\lambda$ (more precisely: the distance of $\lambda$ to $\tfrac12$) we want to make $c$ from~\eqref{eq:c-estimate-evs} as large as
possible. This is achieved by minimizing the denominator of $c$ over $t$ which happens for
\begin{equation*}
t = \frac{\norm{z}}{\norm{Bz}}.
\end{equation*}
This gives $c = \real(\iprod{Bz,z})/(2\norm{z}\norm{Bz})$ and note that $0\leq c\leq 2$ (which implies $0\leq \tfrac{c}{1+2c}\leq\tfrac14$). 
This motivates an adaptive choice for the stepsize $t_n$ as
\begin{equation}\label{eq:adaptive_stepsize}
t_{n} := \frac{\norm{u^{n}}}{\norm{Bu^{n}}},
\end{equation}
in  the Douglas-Rachford iteration scheme \eqref{eq:DR-iter-1}.  
\begin{remark}
  One can use the above derivation to deduce that $t = 1/\norm{B}$ is a good constant step-size.
  In fact, this is also the stepsize that gives the best linear rate derived in~\cite[Proposition 4]{lions1979splitting}, which is minimized when $t=1/M$ where $M$ is the Lipschitz constant of $B$.
  However, this choice does not perform well in practice in our experiments.
\end{remark}

Since little is known about the non-stationary Douglas-Rachford iteration in general (besides the result from \cite{Liang2017} on convergent stepsizes with summable
errors), we turn to an investigation of this method in Section~\ref{sec:conv-non-stationary-dr}.
Before we do so, we generalize the heuristic stepsize to the case of multivalued $B$.
\subsection{The construction of adaptive stepsize for non-single-valued $B$}\label{subsec:DR_scheme2}
In the case of multi-valued $B$, one needs to apply the iteration~\eqref{eq:DR-iter-2} instead of~\eqref{eq:DR-iter-1}.
To motivate an adaptive choice for the stepsize in this case, we again consider situation of linear operators.

In the linear case, the iteration~\eqref{eq:DR-iter-2} is given by the iteration matrix
\begin{equation*}
F_{t} = J_{tA}(2J_{tB} - \Id) - J_{tB} + \Id.
\end{equation*}
Comparing this with the iteration map $H_{t}$ from~\eqref{eq:linear-iteration-map} (corresponding to~\eqref{eq:DR-iter-1}) one notes that
\begin{equation*}
F_{t} = (\Id+tB) H_{t}(\Id + tB)^{-1},
\end{equation*}
i.e., the matrices $F_{t}$ and $H_{t}$ are similar and hence, have the same eigenvalues.
Moreover, if $z$ is an eigenvector of $H_{t}$ with the eigenvalue $\lambda$, then $(\Id+tB)z$ is an eigenvector of $F_{t}$ for the same eigenvalue $\lambda$.

However, in the case of the iteration~\eqref{eq:DR-iter-2} we do not assume that $B$ is single-valued, and thus, the adaptive stepsize using the quotient $\norm{u}/\norm{Bu}$ cannot be used.
However, again due to~\eqref{eq:resolvent-By}, we can rewrite this quotient without applying $B$ and get, with $J_{tB}y = u$, that
\begin{equation}\label{eq:adaptive_stepsize2}
\frac{\norm{u}}{\norm{Bu}} = \frac{\norm{J_{tB}y}}{\norm{\tfrac1t(y-J_{tB}y)}} = t \frac{\norm{J_{tB}y}}{\norm{y-J_{tB}y}}.
\end{equation}

Note that the two iteration schemes \eqref{eq:DR-iter-1}  and~\eqref{eq:DR-iter-2} are not equivalent in the non-stationary  and non-linear case. 
Indeed, let us consider $y^n$ such that $u^n := J_{t_{n-1}B}y^n$.  
By induction, we have $u^{n+1} = J_{t_{n}B}y^{n+1}$.  Substituting $u^{n+1}$ into \eqref{eq:DR-iter-1}, we obtain
\begin{align}\label{eq:yn_point}
	y^{n+1} = J_{t_{n}A}\left(u^n - t_nBu^n\right) + t_nBu^n.
\end{align}
From~\eqref{eq:resolvent-By} we have 
\begin{equation*}
	Bu^n = BJ_{t_{n-1}B}y^n =  \tfrac{1}{t_{n-1}}\left(y^n - J_{t_{n-1}B}y^n\right).
\end{equation*}
Substituting $u^n = J_{t_{n-1}B}y^n$ and $Bu^n$ into \eqref{eq:yn_point}, we obtain
\begin{align*}
    y^{n+1}  &=  J_{t_nA}\left( J_{t_{n-1}B}y^n - \tfrac{t_n}{t_{n-1}}\left(y^n - J_{t_{n-1}B}y^n\right)\right) + \tfrac{t_n}{t_{n-1}}\left(y^n - J_{t_{n-1}B}y^n\right) \nonumber\\
             &= \tfrac{t_n}{t_{n-1}}y^n + J_{t_nA}\left( \left(1 + \tfrac{t_n}{t_{n-1}}\right)J_{t_{n-1}B}y^n - \tfrac{t_n}{t_{n-1}}y^n \right) - \tfrac{t_n}{t_{n-1}}J_{t_{n-1}B}y^n.\label{eq:nonstat-DR}
\end{align*}
Updating $t_n$ by~\eqref{eq:adaptive_stepsize2} would then give
\begin{equation*}
    t_n := \kappa_n t_{n-1},~~~~\text{where}~~~\kappa_n := \frac{\norm{J_{t_{n-1}B}y^n}}{\norm{y^n - J_{t_{n-1}B}y^n}}.
\end{equation*}
In summary, we can write an alternative DR scheme for solving \eqref{eq:mono_inc}  as
\begin{equation}\label{eq:new_DR_scheme}
\left\{\begin{array}{ll}
        u^n &:= J_{t_{n-1}B}y^n, \vspace{1ex}\\
        \kappa_n &:= \frac{\norm{u^n}}{\norm{u^n - y^n}}, \vspace{1ex}\\
        t_n &:= \kappa_nt_{n-1}, \vspace{1ex}\\ 
        v^n &:= J_{t_nA}\left( (1+\kappa_n)u^n - \kappa_ny^n \right),\vspace{1ex}\\
        y^{n+1} &:= v^n + \kappa_n(y^n - u^n).
\end{array}\right.
\end{equation}
This scheme essentially has the same per-iteration complexity as in the standard DR method since the computation of $\kappa_n$ does not significantly increase the cost.

Note that the non-stationary scheme~\eqref{eq:new_DR_scheme} is notably different from the non-stationary scheme derived directly from~\eqref{eq:DR-iter-2} 
(which has been analyzed in~\cite{Liang2017}).
To the best of our knowledge, the scheme~\eqref{eq:new_DR_scheme} is new.

\section{Convergence of the non-stationary DR method}\label{sec:conv-non-stationary-dr}
In this section, we prove weak convergence of the new non-stationary scheme~\eqref{eq:new_DR_scheme}.
We follow the approach by~\cite{svaiter2018simplified,svaiter2011weak} and restate the DR iteration as follows: 
Given $(u^{0},b^{0})$ such that $b^{0}\in B(u^{0})$ and a sequence $\set{t_{n}}_{n\geq 0}$, at each iteration $n\geq 0$, we iterate
\begin{equation}\label{eq:DR-iter-svaiter}
\left\{\begin{aligned}
    a^{n}& \in A(v^{n}), & v^{n} + t_{n}a^{n} & = u^{n-1}-t_{n}b^{n-1}\\
    b^{n}& \in B(u^{n}), & u^{n} + t_{n}b^{n} & = v^{n} +
    t_{n}b^{n-1} .
\end{aligned}\right.
\end{equation}
Note that, in the case of single-valued $B$, this iteration reduces to
\begin{equation*}
u^{n} = J_{t_{n}B}(J_{t_{n}A}(u^{n-1}-t_{n}Bu^{n-1}) + t_{n}Bu^{n-1}),
\end{equation*}
and this scheme can, as shown in Section~\ref{subsec:DR_scheme2}, be transformed into the non-stationary iteration scheme~\eqref{eq:new_DR_scheme}.

Below are some consequences which we will need in our analysis:
\begin{align}
&u^{n-1}-u^{n}  = t_{n}(a^{n}+b^{n})\label{eq:newDR-aux1}.\\
&t_{n}(b^{n-1}-b^{n})  = u^{n}-v^{n}\label{eq:newDR-aux2}.\\
&u^{n} - v^{n} + t_{n}(a^{n}+b^{n})  = t_{n}(a^{n} + b^{n-1}) = u^{n-1}-v^{n} \label{eq:newDR-aux3}.
\end{align}
Before proving  our convergence result, we state the following lemma.
\begin{lemma}\label{lem:DR-svaiter-aux}
Let $\set{\alpha_n}$, $\set{\beta_n}$, and $\set{\omega_n}$ be three nonnegative sequences, and $\set{\tau_n}$ be a bounded sequence  such that for $n\geq 0$:
\begin{equation*}
0 < \underline{\tau} \leq \tau_{n} \leq \bar{\tau}, ~~\abs{\tau_{n}-\tau_{n+1}}\leq \omega_{n},~~\text{and}~\sum_{n=0}^{\infty} \omega_{n}<\infty. 
\end{equation*}
If  $\alpha_{n-1} + \tau_{n}\beta_{n-1}\geq \alpha_{n}+\tau_{n}\beta_{n}$, then $\set{\alpha_{n}}$ and $\set{\beta_{n}}$ are bounded.
\end{lemma}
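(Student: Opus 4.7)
The plan is to build a quasi-monotone Lyapunov-type quantity that combines $\alpha_n$ and $\beta_n$, show that it grows only by a multiplicative factor whose cumulative product is bounded, and then read off boundedness of each sequence. The natural candidate is
\[
\Phi_n := \alpha_n + \tau_n\beta_n,
\]
which is nonnegative since $\alpha_n,\beta_n \geq 0$ and $\tau_n > 0$.

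First I would rewrite the hypothesis $\alpha_{n-1} + \tau_n\beta_{n-1} \geq \alpha_n + \tau_n\beta_n$ as
\[
\Phi_n \;\leq\; \alpha_{n-1} + \tau_n\beta_{n-1} \;=\; \Phi_{n-1} + (\tau_n-\tau_{n-1})\beta_{n-1}.
\]
This is the key identity: the hypothesis has the same $\tau_n$ on both sides, but $\Phi_{n-1}$ involves $\tau_{n-1}$, so the discrepancy is exactly $(\tau_n-\tau_{n-1})\beta_{n-1}$, which we can control by the summable sequence $\{\omega_n\}$.

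Next, using $|\tau_n-\tau_{n-1}|\leq \omega_{n-1}$ and the lower bound $\tau_{n-1}\geq \underline\tau$, we bound
\[
(\tau_n-\tau_{n-1})\beta_{n-1} \;\leq\; \omega_{n-1}\,\beta_{n-1} \;\leq\; \frac{\omega_{n-1}}{\underline\tau}\,\tau_{n-1}\beta_{n-1} \;\leq\; \frac{\omega_{n-1}}{\underline\tau}\,\Phi_{n-1}.
\]
Combining with the previous display yields the multiplicative recursion
\[
\Phi_n \;\leq\; \Bigl(1 + \tfrac{\omega_{n-1}}{\underline\tau}\Bigr)\Phi_{n-1}.
\]
Iterating gives $\Phi_n \leq \Phi_0 \prod_{k=0}^{n-1}\bigl(1+\omega_k/\underline\tau\bigr)$, and since $\sum_n \omega_n < \infty$, the infinite product converges (equivalently, $\log\prod(1+\omega_k/\underline\tau)\leq \sum_k \omega_k/\underline\tau <\infty$). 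Hence $\Phi_n$ is uniformly bounded.

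Finally, from $0\leq \alpha_n\leq \Phi_n$ and $0\leq \beta_n\leq \Phi_n/\underline\tau$ we conclude that both $\{\alpha_n\}$ and $\{\beta_n\}$ are bounded. The only mildly delicate step is the second one, where one must recognize that the shift between $\tau_n$ and $\tau_{n-1}$ in the Lyapunov function is controllable precisely by $\omega_{n-1}$; the rest is a straightforward Grönwall-style product estimate. I do not anticipate any serious obstacle beyond choosing $\Phi_n$ with the right coefficient so that the one-step drift is bounded by a multiple of $\Phi_{n-1}$ itself.
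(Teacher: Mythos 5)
Your proof is correct. It is the same basic strategy as the paper's---bound the combined quantity $\alpha_n+\tau\beta_n$ by a convergent infinite product---but your decomposition is genuinely different and, in fact, cleaner. The paper works with $\alpha_n+\tau_{n+1}\beta_n$, splits into the cases $\tau_{n+1}\leq\tau_n$ and $\tau_{n+1}\geq\tau_n$, and arrives at $\alpha_{n-1}+\tau_n\beta_{n-1}\geq\bigl(1-\tfrac{\omega_n}{\underline\tau}\bigr)(\alpha_n+\tau_{n+1}\beta_n)$; it then needs the factors $1-\tfrac{\omega_l}{\underline\tau}$ to be positive, which only holds for $n$ large enough and is handled with a ``without loss of generality'' remark. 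Your choice of $\Phi_n=\alpha_n+\tau_n\beta_n$ turns the mismatch between $\tau_n$ and $\tau_{n-1}$ into an \emph{additive} error $(\tau_n-\tau_{n-1})\beta_{n-1}$, which you absorb into $\tfrac{\omega_{n-1}}{\underline\tau}\Phi_{n-1}$ to get $\Phi_n\leq\bigl(1+\tfrac{\omega_{n-1}}{\underline\tau}\bigr)\Phi_{n-1}$. Since these factors are always $\geq 1$, the product estimate is unconditional: no case split, no positivity caveat, and the convergence of $\prod\bigl(1+\omega_k/\underline\tau\bigr)$ follows immediately from summability of $\omega_k$. The final step, reading off $\alpha_n\leq\Phi_n$ and $\beta_n\leq\Phi_n/\underline\tau$, is the same in both arguments. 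In short, your route buys a shorter and slightly more rigorous write-up of the same Gr\"onwall-product idea.
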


\begin{proof}
If $\tau_{n+1}\leq \tau_{n}$, then
\begin{equation*}
	\alpha_{n-1} + \tau_{n}\beta_{n-1}\geq  \alpha_{n}+\tau_{n}\beta_{n} \geq \alpha_{n}+\tau_{n+1}\beta_{n}.
\end{equation*}
If $\tau_{n+1}\geq \tau_{n}$, then $\frac{\tau_n}{\tau_{n+1}} \leq 1$ and
\begin{equation*}
	\alpha_{n-1} + \tau_{n}\beta_{n-1} \geq  \alpha_{n}+\tau_{n}\beta_{n} \geq \tfrac{\tau_n}{\tau_{n+1}}\alpha_n + \tau_n\beta_{n} = \tfrac{\tau_{n}}{\tau_{n+1}}(\alpha_{n} + \tau_{n+1}\beta_{n}).
\end{equation*}
By the assumption that $\tfrac{\tau_{n}}{\tau_{n+1}}\geq 1 - \tfrac{\omega_{n}}{\underline \tau}$ and, without loss of generality, we assume that the latter term is positive (which is fulfilled for $n$ large enough, because $\omega_{n}\to 0$). Thus, in both cases, we can show that
\begin{equation*}
	\alpha_{n-1} + \tau_{n}\beta_{n-1}\geq \left(1 - \tfrac{\omega_{n}}{\underline \tau}\right)\left(\alpha_{n}+\tau_{n+1}\beta_{n} \right).
\end{equation*}
Recursively, we get
\begin{equation*}
	\alpha_{0}+\tau_{1}\beta_{0}\geq \prod_{l=1}^{n}\left(1 - \tfrac{\omega_{l}}{\underline \tau}\right)\left(\alpha_{n}+\tau_{n+1}\beta_{n}\right).
\end{equation*}
Under the assumption $\sum_{n=0}^{\infty}\omega_n < +\infty$, we have $\prod_{l=1}^{n}\left(1 - \tfrac{\omega_{l}}{\underline \tau}\right) \geq M$ for some $M>0$ and all $n \geq 1$.
Then, we have $\alpha_{n}+\tau_{n+1}\beta_{n} \leq \frac{1}{M}\left(\alpha_{0}+\tau_{1}\beta_{0}\right)$.
This shows that $\set{\alpha_{n}+\tau_{n+1}\beta_{n}}$ is bounded. 
Since $\set{\alpha_n}$, $\set{\beta_n}$, and $\set{\tau_n}$ are all nonnegative, it implies that $\set{\alpha_n}$ and $\set{\beta_n}$ are bounded.
\end{proof}

\begin{theorem}[Convergence of non-stationary DR]\label{th:main_theorem}
Let  $A$ and $B$ be maximally monotone and $\set{t_{n}}$ be a positive sequence such that
\begin{equation*}
0 < \underline{t} \leq t_{n} \leq \bar{t},\quad\sum_{n=0}^{\infty}\abs{t_{n}-t_{n+1}}<\infty\quad\text{and}\quad t_{n}\to t^{*},
\end{equation*} 
where $0 < \underline{t} \leq \bar{t} < +\infty$ are given.
Then, the sequence $\set{(u^n, b^n)}$ generated by the iteration scheme~\eqref{eq:DR-iter-svaiter}  weakly converges  to some $(u^{*},b^{*})$ in the extended solution set $S(A,B) = \{(z,w)\mid w\in B(z),\ -w\in A(z)\}$ of \eqref{eq:mono_inc}, so in particular, $0 \in (A+B)(u^{*})$.
\end{theorem}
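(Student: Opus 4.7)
The plan is to adapt Svaiter's simplified convergence proof from~\cite{svaiter2018simplified} to the non-stationary regime, using Lemma~\ref{lem:DR-svaiter-aux} to absorb the stepsize variation. Fix an arbitrary extended solution $(u^{*},b^{*})\in S(A,B)$ and work with the Lyapunov quantity
\begin{equation*}
V_{n}(u^{*},b^{*}) := \norm{u^{n}-u^{*}}^{2} + t_{n}^{2}\norm{b^{n}-b^{*}}^{2}.
\end{equation*}
The first step is to derive the Svaiter-type per-iteration inequality
\begin{equation*}
\norm{u^{n}-u^{*}}^{2} + t_{n}^{2}\norm{b^{n}-b^{*}}^{2} + \norm{u^{n}-v^{n}}^{2} + \norm{v^{n}-u^{n-1}}^{2} \leq \norm{u^{n-1}-u^{*}}^{2} + t_{n}^{2}\norm{b^{n-1}-b^{*}}^{2}.
\end{equation*}
I would obtain this by expanding $\norm{u^{n}-u^{*}}^{2}$ and $t_{n}^{2}\norm{b^{n}-b^{*}}^{2}$ using identities~\eqref{eq:newDR-aux1}--\eqref{eq:newDR-aux3}, then applying monotonicity of $B$ to the pairs $(u^{n},b^{n}),(u^{*},b^{*})$ and of $A$ to $(v^{n},a^{n}),(u^{*},-b^{*})$. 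In the stationary case this is precisely Svaiter's inequality, and the stepsize enters only through the weight $t_{n}^{2}$ on the $b$-term, so the algebra transfers verbatim.

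Next, I would apply Lemma~\ref{lem:DR-svaiter-aux} with $\alpha_{n}=\norm{u^{n}-u^{*}}^{2}$, $\beta_{n}=\norm{b^{n}-b^{*}}^{2}$ and $\tau_{n}=t_{n}^{2}$ (so $\underline{t}^{2}\leq\tau_{n}\leq\bar{t}^{2}$ and $\omega_{n}:=2\bar{t}\abs{t_{n}-t_{n+1}}$ is summable), obtaining boundedness of $\set{u^{n}}$ and $\set{b^{n}}$. Re-running the case split inside the proof of Lemma~\ref{lem:DR-svaiter-aux} while keeping the positive remainders visible yields a quasi-Fej\'er inequality of the form
\begin{equation*}
\Psi_{n} \leq (1+C\omega_{n})\Psi_{n-1} - \norm{u^{n}-v^{n}}^{2} - \norm{v^{n}-u^{n-1}}^{2},
\end{equation*}
where $\Psi_{n}:=\norm{u^{n}-u^{*}}^{2}+t_{n+1}^{2}\norm{b^{n}-b^{*}}^{2}$. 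The Robbins--Siegmund lemma then gives that $\lim_{n}\Psi_{n}$ (equivalently $\lim_{n}V_{n}$, since $t_{n}\to t^{*}$) exists and that $\sum_{n}(\norm{u^{n}-v^{n}}^{2}+\norm{v^{n}-u^{n-1}}^{2})<\infty$. Combined with $t_{n}\geq\underline{t}>0$ and~\eqref{eq:newDR-aux1}--\eqref{eq:newDR-aux2}, this yields strong asymptotic regularity: $u^{n}-v^{n}\to 0$, $u^{n}-u^{n-1}\to 0$, $b^{n}-b^{n-1}\to 0$, and $a^{n}+b^{n}\to 0$.

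With these ingredients, any weak cluster point $(u^{\dagger},b^{\dagger})$ of the bounded sequence $\set{(u^{n},b^{n})}$ satisfies, along the extracting subsequence $n_{k}$, $v^{n_{k}}\weakto u^{\dagger}$ and $a^{n_{k}}=-b^{n_{k}}+(a^{n_{k}}+b^{n_{k}})\weakto -b^{\dagger}$. To identify $(u^{\dagger},b^{\dagger})\in S(A,B)$ I would use the cross-term identity
\begin{equation*}
\iprod{a^{n_{k}},v^{n_{k}}}+\iprod{b^{n_{k}},u^{n_{k}}} = \iprod{a^{n_{k}}+b^{n_{k}},v^{n_{k}}}+\iprod{b^{n_{k}},u^{n_{k}}-v^{n_{k}}} \to 0 = \iprod{-b^{\dagger},u^{\dagger}}+\iprod{b^{\dagger},u^{\dagger}},
\end{equation*}
and test against arbitrary $(z,w)\in\operatorname{graph}(B)$ and $(z,-w)\in\operatorname{graph}(A)$: summing the two monotonicity inequalities $\iprod{b^{n_{k}}-w,u^{n_{k}}-z}\geq 0$ and $\iprod{a^{n_{k}}+w,v^{n_{k}}-z}\geq 0$ makes the $\iprod{a^{n_{k}},v^{n_{k}}}+\iprod{b^{n_{k}},u^{n_{k}}}$ term appear, and passing to the limit plus maximality of $A$ and $B$ forces $b^{\dagger}\in B(u^{\dagger})$ and $-b^{\dagger}\in A(u^{\dagger})$. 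Finally, Opial's lemma on $\Hbs\times\Hbs$, applied to $V_{n}$ using the fact that $\lim_{n}V_{n}(u^{*},b^{*})$ exists for \emph{every} $(u^{*},b^{*})\in S(A,B)$, upgrades subsequential weak convergence to weak convergence of the full sequence.

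The step I expect to be the main obstacle is the clean derivation of the first Svaiter-type inequality: the cross terms created by expanding $V_{n}$ must be algebraically shown to be dominated, via monotonicity of $A$ and $B$ and the three identities~\eqref{eq:newDR-aux1}--\eqref{eq:newDR-aux3}, by exactly $\norm{u^{n}-v^{n}}^{2}+\norm{v^{n}-u^{n-1}}^{2}$, with no residual term involving differences of stepsizes. A secondary technical point is ensuring that the shift $\tau_{n}\mapsto\tau_{n+1}$ inside the Fej\'er analysis costs only a summable multiplicative factor against the positive remainders; everything else is a routine transfer of Svaiter's stationary proof.
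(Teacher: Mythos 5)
Your overall strategy---a Svaiter-type Fej\'er inequality weighted by $t_n^2$, Lemma~\ref{lem:DR-svaiter-aux} for boundedness, then a quasi-Fej\'er/Opial argument for convergence of the whole sequence---is the same as the paper's. The genuine gap sits exactly where you flagged it: the per-iteration inequality with remainder $\norm{u^{n}-v^{n}}^{2}+\norm{v^{n}-u^{n-1}}^{2}$ is false, and it is not Svaiter's inequality in the stationary case either. Monotonicity of $A$ and $B$ produces precisely the cross term $2t_{n}\iprod{u^{n}-v^{n},a^{n}+b^{n}}$ needed to combine $\norm{u^{n}-v^{n}}^{2}$ and $t_{n}^{2}\norm{a^{n}+b^{n}}^{2}$ into the \emph{single} square $\norm{u^{n}-v^{n}+t_{n}(a^{n}+b^{n})}^{2}=\norm{u^{n-1}-v^{n}}^{2}$ (by~\eqref{eq:newDR-aux3}); it does not give the two squares separately. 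A concrete counterexample: $\Hbs=\R$, $A=0$, $B=2\,\Id$, $t_{n}\equiv 1$, $(u^{0},b^{0})=(1,2)$, $(u^{*},b^{*})=(0,0)$. Then $v^{1}=-1$, $u^{1}=\tfrac13$, $b^{1}=\tfrac23$, the decrease of $\norm{u^{n}-u^{*}}^{2}+\norm{b^{n}-b^{*}}^{2}$ over the first step is $5-\tfrac59=\tfrac{40}{9}$, while $\norm{u^{1}-v^{1}}^{2}+\norm{v^{1}-u^{0}}^{2}=\tfrac{16}{9}+4=\tfrac{52}{9}>\tfrac{40}{9}$ (the correct single-square remainder is $\norm{u^{0}-v^{1}}^{2}=4\leq\tfrac{40}{9}$).

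This matters downstream: with only $\sum_{n}\norm{u^{n-1}-v^{n}}^{2}<\infty$ you obtain $u^{n-1}-v^{n}\to 0$ and (equivalently, since $u^{n-1}-v^{n}=t_{n}(a^{n}+b^{n-1})$ and $t_{n}\geq\underline{t}$) $a^{n}+b^{n-1}\to 0$, but \emph{not} the separate limits $u^{n}-v^{n}\to 0$, $a^{n}+b^{n}\to 0$, $b^{n}-b^{n-1}\to 0$ that your cluster-point identification relies on; so your cross-term identity $\iprod{a^{n_{k}},v^{n_{k}}}+\iprod{b^{n_{k}},u^{n_{k}}}\to 0$ is not justified as written. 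The repair is to shift the index in the pairing: test $(v^{n},a^{n})\in\operatorname{graph}(A)$ against $(u^{n-1},b^{n-1})\in\operatorname{graph}(B)$, for which both available limits apply, giving
\begin{equation*}
\iprod{a^{n},v^{n}}+\iprod{b^{n-1},u^{n-1}}=\iprod{a^{n}+b^{n-1},v^{n}}+\iprod{b^{n-1},u^{n-1}-v^{n}}\to 0,
\end{equation*}
after which your monotonicity-plus-maximality argument identifies the weak cluster points of $\set{(u^{n},b^{n})}$ as elements of $S(A,B)$. This is exactly how the paper proceeds, invoking \cite[Corollary~3]{bauschke2009note} on the shifted subsequences $v^{n_{l}+1}\weakto u^{*}$, $a^{n_{l}+1}\weakto -b^{*}$. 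The remaining ingredients of your plan (the choice $\omega_{n}=2\bar{t}\abs{t_{n}-t_{n+1}}$ in Lemma~\ref{lem:DR-svaiter-aux}, the summable perturbation $(t_{n+1}^{2}-t_{n}^{2})\norm{b^{n}-b}^{2}$, and the final Opial/quasi-Fej\'er step with the limiting metric weight $(t^{*})^{2}$) coincide with the paper's proof and are fine.
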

\begin{proof}
The proof of this theorem follows the proof of~\cite[Theorem 1]{svaiter2011weak}.
First, we observe that, for any $(u,b)\in S(A,B)$, we have
\begin{align*}
\iprod{u^{n-1}-u^{n},u^{n}-u} & = t_{n}\iprod{a^{n}+b^{n},u^{n}-u} & \text{by~\eqref{eq:newDR-aux1}}\\
                                  & = t_{n}\big[\iprod{a^{n}+b,u^{n}-u} + \iprod{b^{n}-b,u^{n}-u}\big]\\
                                  & \geq t_{n}\iprod{a^{n}+b,u^{n}-u} & \text{$A$ is monotone}\\
                                  & = t_{n}\big[\iprod{a^{n}+b,u^{n}-v^{n}} + \iprod{a^{n}+b,v^{n}-u}\big]\\
                                  & \geq t_{n}\iprod{a^{n}+b,u^{n}-v^{n}}.& \text{$B$ is monotone}
\end{align*}
From this and~\eqref{eq:newDR-aux2} it follows that 
\begin{align*}
\iprod{u^{n-1}-u^{n},u^{n}-u} + t_{n}^{2}\iprod{b^{n-1}-b^{n},b^{n}-b} & \geq t_{n}\iprod{a^{n} +b, u^{n}-v^{n}} \\
& + t_{n}\iprod{u^{n}-v^{n},b^{n}-b}\\
& = t_{n}\iprod{u^{n}-v^{n},a^{n}+b^{n}}.
\end{align*}
Moreover, by~\eqref{eq:newDR-aux1} and~\eqref{eq:newDR-aux2} it holds that
\begin{equation*}
\norm{u^{n-1}-u^{n}}^{2} + t_{n}^{2}\norm{b^{n-1}-b^{n}}^{2} = t_{n}^{2}\norm{a^{n}+b^{n}}^{2} + \norm{u^{n}-v^{n}}^{2},
\end{equation*}
and thus
\begin{align}
\norm{u^{n-1}-u}^{2} &+ t_{n}^{2}\norm{b^{n-1}-b}^{2} = \norm{u^{n-1} - u^{n} + u^{n} - u}^{2} + t_{n}^{2}\norm{b^{n-1} - b^{n} + b^{n} - b}^{2} \nonumber\\
& = \norm{u^{n-1}-u^{n}}^{2} + 2\iprod{u^{n-1}-u^{n},u^{n}-u} + \norm{u^{n}-u}^{2}\nonumber\\
&\quad +t_{n}^{2}\big[ \norm{b^{n-1}-b^{n}}^{2} + 2\iprod{b^{n-1}-b^{n},b^{n}-b} + \norm{b^{n}-b}^{2}\big]\nonumber\\
& \geq t_{n}^{2}\norm{a^{n}+b^{n}}^{2} + \norm{u^{n}-v^{n}}^{2} + 2t_{n}\iprod{u^{n}-v^{n},a^{n}+b^{n}}\nonumber\\
& \quad+ \norm{u^{n}-u}^{2} + t_{n}^{2}\norm{b^{n}-b}^{2}\nonumber\\
& = \norm{u^{n}-u}^{2} + t_{n}^{2}\norm{b^{n}-b}^{2} + \norm{u^{n}-v^{n}+t_{n}(b^{n}+a^{n})}^{2}.\label{eq:newDR-est1}
\end{align}
We see from \eqref{eq:newDR-est1} that
\[
\norm{u^{n-1}-u}^{2} + t_{n}^{2}\norm{b^{n-1}-b}^{2} \geq  \norm{u^{n}-u}^{2} + t_{n}^{2}\norm{b^{n}-b}^{2},
\]
and using Lemma~\ref{lem:DR-svaiter-aux} with $\alpha_{n} = \norm{u^{n}-u}^{2}$, $\tau_{n} = t_{n}^{2}$ and $\beta_{n} = \norm{b^{n}-b}^{2}$, we can conclude that  both sequences $\set{\norm{u^{n}-u}}$ and $\set{\norm{b^{n}-b}}$ are bounded.
  
Again from~\eqref{eq:newDR-est1} we can deduce using~\eqref{eq:newDR-aux3} that
\begin{equation}\label{eq:newDR-est2}
{\!\!\!\!\!}\begin{array}{ll}
\norm{u^{n-1}{\!\!} -u}^{2} + t_{n}^{2}\norm{b^{n-1}{\!\!} -b}^{2} &{\!\!\!\!} \geq \norm{u^{n}-u}^{2} + t_{n}^{2}\norm{b^{n}-b}^{2} + \norm{u^{n-1}-v^{n}}^{2}\vspace{1.2ex}\\
&{\!\!\!\!} = \norm{u^{n}-u}^{2} + t_{n}^{2}\norm{b^{n}-b}^{2} + t_{n}^{2}\norm{a^{n}+b^{n-1}}^{2}.
\end{array}{\!\!\!\!\!\!}
\end{equation}
The first line gives
\begin{align*}
\norm{u^{n-1}-u}^{2} + t_{n}^{2}\norm{b^{n-1}-b}^{2} & \geq \norm{u^{n}-u}^{2} + t_{n+1}^{2}\norm{b^{n}-b}^{2} + \norm{u^{n-1}-v^{n}}^{2}\\
& \quad + (t_{n}^{2}-t_{n+1}^{2}) \norm{b^{n}-b}^{2}.
\end{align*}
Summing this inequality from $n=1$ to $n = N$, we get
\begin{align*}
\sum_{n=1}^{N}\norm{u^{n-1}-v^{n}}^{2} & \leq \norm{u^{0}-u}^{2} + t_{1}^{2}\norm{b^{0}-b}^{2} - \left( \norm{u^{N}-u}^{2} + t_{N+1}^{2}\norm{b^{N}-b}^{2}\right)\\
                                           & \quad + \sum_{n=1}^{N}(t_{n+1}^{2} - t_{n}^{2})\norm{b^{n}-b}^{2}.
\end{align*}
Now, since $\norm{b^{n}-b}^{2}$ is bounded and it holds that
\begin{equation*}
	\sum_{n=1}^{\infty}\abs{t_{n}^{2}-t_{n+1}^{2}} = \sum_{n=1}^{\infty}\abs{t_{n}-t_{n+1}}\abs{t_{n}+t_{n+1}} \leq 2\overline{t}\sum_{n=1}^{\infty}\abs{t_{n}-t_{n+1}}<\infty
\end{equation*}
by our assumption, we can  conclude that
\begin{equation*}
	\sum_{n=1}^{\infty}\norm{u^{n-1}-v^{n}}^{2} <\infty,
\end{equation*}
i.e., by~\eqref{eq:newDR-aux3}, we have
\begin{equation*}
	\lim_{n\to\infty} u^{n-1}-v^{n} = \lim_{n\to\infty} a^{n}+b^{n-1} = 0.
\end{equation*}
This expression shows that $v^{n}$ and $a^{n}$ are also bounded.
Due to the boundedness of $\set{(u^{n},b^{n})}$, we conclude the existence of weak convergence subsequences $\set{u_{n_l}}_l$ and $\set{b_{n_l}}_l$ such that
\begin{equation*}
	u^{n_{l}}\weakto u^{*},\quad b^{n_{l}}\weakto b^{*},
\end{equation*}
and by the above limits, we also have
\begin{equation*}
	v^{n_{l}+1}\weakto u^{*},\quad a^{n_{l}+1}\weakto b^{*}.
\end{equation*}
From~\cite[Corollary 3]{bauschke2009note} it follows that $(u^{*},b^{*})\in S(A,B)$. This shows that $\{(u^{n},b^{n})\}$ has a weak cluster point and that all such points are in $S(A,B)$.
Now we deduce from~\eqref{eq:newDR-est2} that
\begin{equation*}
\begin{array}{ll}
  \norm{u^{n}-u^{*}}^{2} + (t^{*})^{2}\norm{b^{n}-b^{*}}^{2} & \leq   \norm{u^{n-1}-u^{*}}^{2} + (t^{*})^{2}\norm{b^{n-1}-b^{*}}^{2} \vspace{1ex}\\
  &+ \abs{t_{n}^{2}-(t^{*})^{2}}\left|\norm{b^{n-1}-b^{*}}^{2} - \norm{b^{n}-b^{*}}^{2}\right|.
\end{array}
\end{equation*}
Since $\norm{b^{n}-b^{*}}^{2}$ is bounded and $t^n\to t^{*}$, this shows that the sequence $\{(u^{n},b^{n})\}$ is quasi-Fejer convergent to the extended solution set $S(A,B)$ with respect to the distance $d( (u,b),(z,w) ) = \norm{u-z}^{2} + (t^{*})^{2}\norm{b-w}^{2}$.
Thus, similar to the proof of~\cite[Theorem 1]{svaiter2011weak}, we conclude that the whole sequence $\set{(u^{n},b^{n})}$ weakly converges  to an element of $S(A,B)$.
\end{proof}

\section{An adaptive step-size for DR methods}\label{sec:adaptive-stepsize}
The step-size $t_n$ suggested by \eqref{eq:adaptive_stepsize} or by \eqref{eq:adaptive_stepsize2} is derived from our analysis of a linear case and
it does not guarantee the convergence in general.
In this section, we suggest modifying this step-size so that we can prove the convergence of the DR scheme.
We build our adaptive step-size based on two insights:
\begin{itemize}
\item The estimates of the eigenvalues of the DR-iteration in the linear case from Section~\ref{subsec:adaptive-1} motivated the adaptive stepsize
\begin{equation}\label{eq:dg-adapt-stepsize}
t_n = \frac{\norm{u^n}}{\norm{Bu^n}}
\end{equation}
for single-valued $B$ is single-valued and
for the general case, we consider
\begin{equation}\label{eq:dg-adapt-stepsize2}
t_n = \frac{\norm{J_{t_{n-1}B}y^{n-1}}}{\norm{y^{n-1} - J_{t_{n-1}B}y^{n-1}}}t_{n-1}
\end{equation}
from Section~\ref{subsec:DR_scheme2}. 
\item Theorem~\ref{th:main_theorem} ensures the convergence of the non-stationary DR-iteration as soon as the stepsize sequence is convergent with summable increments.
\end{itemize}
However, the sequences~\eqref{eq:dg-adapt-stepsize} and~\eqref{eq:dg-adapt-stepsize2} are not guaranteed to converge (and numerical experiments indicate that, indeed, divergence may occur).  
Here is a way to adapt the sequence~\eqref{eq:dg-adapt-stepsize} to produce a suitable stepsize sequence in the single-valued case:
\begin{enumerate}
\item Choose safeguards $0 <  t_{\min} < t_{\max}<\infty$, a summable ``conservation sequence'' $\omega_n\in (0,1]$ with $\omega_0 = 1$ and start with $t_{0} = 0$.
\item Let $\proj_{[\gamma, \rho]}(\cdot)$ be the projection onto a box $[\gamma, \rho]$. We construct $\set{t_n}$ as
\begin{equation}\label{eq:dg-adapt-stepsize-3}
t_n =(1-\omega_n)t_{n-1} +  \omega_n \proj_{[t_{\min},t_{\max}]}\left(\frac{\norm{u^n}}{\norm{Bu^n}}\right).
\end{equation}
\end{enumerate}
The following lemma ensures that this will lead to a convergent sequence $\set{t_n}$.

\begin{lemma}\label{le:convergent_t_n}
Let $\set{\alpha_n}$ be a bounded sequence, i.e., $\underline{\alpha}\leq \alpha_n \leq \bar{\alpha}$, and $\set{\omega_n}\subset (0,1]$ such that $\sum_{n=0}^{\infty} \omega_n < \infty$ and $\omega_0 = 1$.
Then, the sequence $\set{\beta_n}$ defined by $\beta_{0} = 0$ and
\begin{equation*}
\beta_n = (1-\omega_n)\beta_{n-1} + \omega_{n}\alpha_n,
\end{equation*}
is in $[\underline{\alpha}, \bar{\alpha}]$ and converges to some $\beta^{\ast}$ and it holds that $\sum_{n=0}^{\infty}\abs{\beta_{n+1}-\beta_{n}}<\infty$.
\end{lemma}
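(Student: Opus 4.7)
The plan is to prove the three claims---boundedness in $[\underline\alpha, \bar\alpha]$, summability of the increments, and convergence of $\{\beta_n\}$---sequentially, with convergence following from summable increments via the usual Cauchy argument in $\mathbb{R}$.

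First, for boundedness I would argue by induction on $n$. Since $\omega_n \in (0, 1]$, the recursion $\beta_n = (1-\omega_n)\beta_{n-1} + \omega_n \alpha_n$ expresses $\beta_n$ as a convex combination of $\beta_{n-1}$ and $\alpha_n$; hence if $\beta_{n-1} \in [\underline\alpha, \bar\alpha]$ and $\alpha_n \in [\underline\alpha, \bar\alpha]$, then $\beta_n \in [\underline\alpha, \bar\alpha]$ as well. The base case is precisely where the assumption $\omega_0 = 1$ does its work: at the very first update step the formula reduces to $\beta = \alpha \in [\underline\alpha, \bar\alpha]$, so the iterates enter the interval immediately irrespective of the declared initial value $\beta_0 = 0$ (which itself need not lie in the interval).

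Second, I would read off the telescoping identity $\beta_n - \beta_{n-1} = \omega_n(\alpha_n - \beta_{n-1})$ by simply subtracting $\beta_{n-1}$ from both sides of the recursion. Combining this with the boundedness just established gives the clean estimate $|\beta_n - \beta_{n-1}| \leq (\bar\alpha - \underline\alpha)\,\omega_n$, and summing over $n$ together with $\sum_n \omega_n < \infty$ yields $\sum_n |\beta_{n+1} - \beta_n| < \infty$, which is the third claim. Absolute summability of the increments then shows that the partial sums $\beta_n = \beta_0 + \sum_{k=1}^{n}(\beta_k - \beta_{k-1})$ are Cauchy in $\mathbb{R}$, so $\{\beta_n\}$ converges to some $\beta^*$, which lies in $[\underline\alpha, \bar\alpha]$ by the closedness of that interval.

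The whole argument is a short bookkeeping exercise once the telescoping identity is noticed, so there is no genuinely hard step. The only minor subtlety worth flagging is the base case of the boundedness induction, since $\beta_0 = 0$ may fall outside $[\underline\alpha, \bar\alpha]$; the hypothesis $\omega_0 = 1$ is exactly what guarantees immediate entry into the interval at the first update and enables the induction to proceed from there.
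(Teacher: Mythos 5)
Your proof is correct and follows essentially the same route as the paper's: boundedness via the convex-combination observation (with $\omega_0=1$ absorbing the initial value), the identity $\beta_n-\beta_{n-1}=\omega_n(\alpha_n-\beta_{n-1})$ giving $\abs{\beta_n-\beta_{n-1}}\leq\omega_n(\bar\alpha-\underline\alpha)$, and summability plus the Cauchy criterion for convergence. Your explicit handling of the base case (the declared $\beta_0=0$ versus the first update) is in fact slightly more careful than the paper's one-line remark that $\beta_0=\alpha_0$.
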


\begin{proof}
Obviously, $\beta_0 = \alpha_0$ and since $\beta_n$ is a convex combination of $\alpha_n$ and $\beta_{n-1}$, one can easily see that $\beta_n$ obeys the same bounds as $\alpha_n$, i.e. $\underline{\alpha} \leq \beta_n\leq \bar{\alpha}$.
Moreover, it holds that
\begin{equation*}
\beta_n - \beta_{n-1} = \omega_n \alpha_n + (1-\omega_n)\beta_{n-1} - \beta_{n-1} =   \omega_{n}(\alpha_{n} - \beta_{n-1}),
\end{equation*}
thus   $\abs{\beta_n-\beta_{n-1}}\leq \omega_n(\bar{\alpha} - \underline{\alpha})$ from which the assertion follows, since $\omega_{n}$ is summable.
\end{proof}

Clearly, if we apply Lemma \ref{le:convergent_t_n} to the sequence $\set{t_n}$ defined by \eqref{eq:dg-adapt-stepsize-3}, then it converges to some $t^{\ast}$.

We use a similar trick to construct an adaptive stepsize based on the choice \eqref{eq:dg-adapt-stepsize2} in the case of multi-valued operators.
More precisely, we construct $\set{t_n}$ as follows:
\begin{enumerate}
\item Choose safeguards $0 <  \kappa_{\min} < \kappa_{\max}<\infty$, a summable ``conservation sequence'' $\set{\omega_n}\subset (0,1]$, and $t_{0} = 1$.
\item We construct $\set{t_n}$ as
\begin{equation}\label{eq:weighted_stepsize}
\begin{array}{ll}
\kappa_n &:= \proj_{[\kappa_{\min}, \kappa_{\max}]}\left( \frac{\norm{J_{t_{n-1}B}y^{n-1}}}{\norm{y^{n-1} - J_{t_{n-1}B}y^{n-1}}}\right),\vspace{1ex}\\
t_{n} &:= \nu_nt_{n-1},~~~\text{where}~~~\nu_n := 1-\omega_{n} + \omega_n\kappa_n.
\end{array}
\end{equation}
\end{enumerate}
In this case we get that $t_{n} = \prod_{k=1}^{n}\nu_{k} t_{0}$ and since
$\abs{\nu_{n}-1} = \omega_{n}\abs{\kappa_{n}-1}$ and $\kappa_{n}$ is bounded, the summability of $\omega_{n}$ implies summability of $\abs{\nu_{n}-1}$. This implies that $\prod_{k=1}^{\infty}\nu_{k}$ converges to some positive value and and thus, $t_{n}\to t^{*}>0$, too.

The stepsize sequence $\set{t_n}$ constructed by either \eqref{eq:dg-adapt-stepsize-3} or \eqref{eq:weighted_stepsize} fulfills the conditions of Theorem~\ref{th:main_theorem}.
Hence, the convergence of the nonstationary DR scheme using this adaptive stepsize follows as a direct consequence.
We will provide guidelines on how to choose the safeguards and the conservation sequence in practice in Section~\ref{sec:experiments-dr}.

\section{Application to ADMM}\label{sec:admm}
It is well-known that the alternating direction method of multipliers (ADMM) for convex optimization with linear constraint can be interpreted as the DR method on its dual problem, see, e.g. \cite{eckstein1992douglas}.
In this section, we apply our adaptive stepsize to ADMM to obtain a new variant for solving the following constrained problem:
\begin{equation}\label{eq:constr_cvx}
\min_{u, v}\Big\{ \phi(u, v) = \varphi(u) + \psi(v) ~\mid~ Du + Ev = c \Big\},
\end{equation}
where $\varphi : \Hbs_u \to\Rext$, $\psi : \Hbs_v\to\Rext$ are two proper, closed, and convex functions, $D : \Hbs_u\to\Hbs$ and  $E : \Hbs_v \to\Hbs$ are two given bounded linear operators, and $c\in\Hbs$.

The dual problem associated with \eqref{eq:constr_cvx} becomes 
\begin{equation}\label{eq:dual_cvx}
\min_x\Big\{ \varphi^{\ast}(D^Tx) + \psi^{\ast}(E^Tx) - c^Tx \Big\},
\end{equation}
where $\varphi^{\ast}$ and $\psi^{\ast}$ are the Fenchel conjugate of $\varphi$ and $\psi$, respectively.
The optimality condition of \eqref{eq:dual_cvx} becomes 
\begin{equation}\label{eq:opt_dual_cvx}
0 \in \underbrace{D\partial{\varphi}^{\ast}(D^Tx) - c}_{Ax} + \underbrace{E\partial{\psi}^{\ast}(E^Tx)}_{Bx},
\end{equation}
which is of the form \eqref{eq:mono_inc}.

In the stationary case, ADMM is equivalent to the DR method applying to the dual problem \eqref{eq:opt_dual_cvx}, see, e.g., \cite{eckstein1992douglas}.
However, for the non-stationary DR method, we can derive a different parameter update rule for ADMM.
Let us summarize this result into the following theorem for the non-stationary scheme  \eqref{eq:new_DR_scheme}.
The proof of this theorem is given in Appendix \ref{apdx:th:admm}.

\begin{theorem}\label{th:admm}
Given $0 < t_{\min} < t_{\max} <+\infty$, the ADMM scheme for solving \eqref{eq:constr_cvx} derived from the non-stationary DR method~\eqref{eq:new_DR_scheme} applying to \eqref{eq:opt_dual_cvx} becomes:
\begin{equation}\label{eq:DR_admm2}
\left\{\begin{array}{ll}
u^{n+1} &:= \displaystyle\argmin_u \Big\{ \varphi(u) - \iprods{Du, w^n} + \frac{t_{n-1}}{2} \Vert Du + Ev^n - c\Vert^2 \Big\},\\
v^{n+1} &:= \displaystyle\argmin_v \Big\{ \psi(v) - \iprods{Ev, w^n} + \frac{t_{n-1}}{2}\Vert Du^{n+1} + Ev - c\Vert^2 \Big\},\\
w^{n+1} &:= w^n -  t_{n-1}(Du^{n+1} + Ev^{n+1} - c), \\
t_{n}      &:= (1-\omega_n)t_{n-1} + \omega_n\proj_{[t_{\min}, t_{\max}]}\left(\frac{\norm{w^{n+1}}}{\norm{Ev^{n+1}}}\right),~~\omega_n\in (0, 1).
\end{array}\right.
\end{equation}
Consequently, the sequence $\set{w^n}$ generated by \eqref{eq:DR_admm2} weakly converges to a solution $x^{\ast}$ of the dual problem \eqref{eq:dual_cvx}.
\end{theorem}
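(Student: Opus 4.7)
The plan is to follow the classical Eckstein--Bertsekas reduction of ADMM to Douglas--Rachford on the dual, adapted to the non-stationary setting of Theorem~\ref{th:main_theorem} and the adaptive stepsize construction of Section~\ref{sec:adaptive-stepsize}. First, I would identify the two operators on the dual space: with $Ax := D\partial\varphi^{*}(D^{T}x) - c$ and $Bx := E\partial\psi^{*}(E^{T}x)$, the dual optimality condition~\eqref{eq:opt_dual_cvx} takes exactly the form~\eqref{eq:mono_inc}, and both $A$ and $B$ are maximally monotone under standard constraint qualifications on $\varphi$, $\psi$, $D$, and $E$. This is precisely the setting in which Theorem~\ref{th:main_theorem} applies, so the problem reduces to (i) showing that~\eqref{eq:DR_admm2} is the non-stationary DR iteration~\eqref{eq:DR-iter-svaiter} applied to $A+B$ in disguise, and (ii) verifying that the stepsize sequence it generates satisfies the hypotheses of that theorem.

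For (i) I would carry out the standard Fenchel duality bookkeeping. Given a DR state $(x^{n-1},b^{n-1})$ with $b^{n-1}\in B(x^{n-1})$, the Fenchel--Young identity lets us write $b^{n-1} = Ev^{n}$ for some $v^{n}$ with $E^{T}x^{n-1}\in\partial\psi(v^{n})$. Setting $w^{n} := x^{n-1}$, one substitutes this factorization into the $A$-step $v^{n}+t_{n-1}a^{n} = x^{n-1}-t_{n-1}b^{n-1}$ and unfolds $Av^{n} = D\partial\varphi^{*}(D^{T}v^{n}) - c$ via Fenchel conjugation; this produces exactly the $u^{n+1}$-subproblem of~\eqref{eq:DR_admm2} as the Fenchel dual of the $A$-resolvent evaluation. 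The subsequent $B$-step $u^{n}+t_{n-1}b^{n} = v^{n}+t_{n-1}b^{n-1}$ unfolds in the same way into the $v^{n+1}$-subproblem, and the defining relation $b^{n}\in B(x^{n})$ together with the $B$-step recovers the multiplier update $w^{n+1} = w^{n}-t_{n-1}(Du^{n+1}+Ev^{n+1}-c)$. Finally, under the identification $x^{n-1}\leftrightarrow w^{n}$ and $Bx^{n-1} \leftrightarrow Ev^{n+1}$, the DR adaptive stepsize $\|u^{n}\|/\|Bu^{n}\|$ from~\eqref{eq:dg-adapt-stepsize} becomes $\|w^{n+1}\|/\|Ev^{n+1}\|$, which is exactly the quantity projected in the last line of~\eqref{eq:DR_admm2}. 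Hence~\eqref{eq:DR_admm2} is the faithful ADMM transcription of the DR scheme of Section~\ref{subsec:adaptive-1} driven by the convex-combination rule~\eqref{eq:dg-adapt-stepsize-3}.

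For (ii), the quantity $\alpha_{n} := \proj_{[t_{\min},t_{\max}]}(\|w^{n+1}\|/\|Ev^{n+1}\|)$ lies by construction in $[t_{\min},t_{\max}]$, so Lemma~\ref{le:convergent_t_n} applied with this $\alpha_{n}$ and the conservation sequence $\omega_{n}$ guarantees that $\{t_{n}\}$ stays in $[t_{\min},t_{\max}]$, converges to some $t^{*}>0$, and satisfies $\sum_{n}|t_{n+1}-t_{n}|<\infty$. These are exactly the hypotheses of Theorem~\ref{th:main_theorem}, which therefore delivers weak convergence of the DR iterates $\{(x^{n},b^{n})\}$ to some $(x^{*},b^{*})\in S(A,B)$. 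Under the correspondence $x^{n}\leftrightarrow w^{n}$ established in (i), this is exactly weak convergence of $\{w^{n}\}$ to a solution $x^{*}$ of~\eqref{eq:opt_dual_cvx}, hence of the dual problem~\eqref{eq:dual_cvx}.

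The main obstacle I expect is the index bookkeeping in step (i): in the stationary derivation the stepsize factors out of the Fenchel duality computation trivially, but in the non-stationary case one must be careful that the subproblems in~\eqref{eq:DR_admm2} carry the index $t_{n-1}$ (used to compute the current resolvent pair) while $t_{n}$ is formed only after $w^{n+1}$ and $v^{n+1}$ are available. This is consistent with the convex-combination rule~\eqref{eq:dg-adapt-stepsize-3}, for which the update of the stepsize does not enter the current resolvent evaluation, but it has to be checked that this ordering is compatible with the Fenchel identifications; most of the detailed verification is what I would defer to Appendix~\ref{apdx:th:admm}.
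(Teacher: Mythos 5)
Your proposal is correct and follows essentially the same route as the paper's Appendix~\ref{apdx:th:admm}: reduce ADMM to the non-stationary DR iteration on the dual inclusion \eqref{eq:opt_dual_cvx} via Fenchel conjugation of the two resolvent steps, identify the multiplier $w^{n}$ with the DR primal iterate and $Ev^{n+1}$ with the $B$-component (so that $\norm{u^{n}}/\norm{Bu^{n}}$ becomes $\norm{w^{n+1}}/\norm{Ev^{n+1}}$), and then invoke Lemma~\ref{le:convergent_t_n} together with Theorem~\ref{th:main_theorem}. The only differences are cosmetic --- you start from the $(u^{n},b^{n})$ parametrization \eqref{eq:DR-iter-svaiter} rather than from \eqref{eq:new_DR_scheme} with its explicit $\kappa_{n}$ bookkeeping (the paper has already established their equivalence), and your closing identification should read $Bx^{n-1}\leftrightarrow Ev^{n}$ rather than $Ev^{n+1}$, a harmless index slip.
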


The ADMM variant \eqref{eq:DR_admm2} is essentially the same as the standard ADMM, but its parameter $t_n$ is  adaptively updated. 
This rule is different from \cite{he2000alternating,xu2016adaptive}. 

\section{Numerical experiments}\label{sec:numerical-experiments}
In this section we provide several numerical experiments to
illustrate the influence of the stepsize and the adaptive choice in
practical applications.
Although we motivate the adaptive stepsize only for linear problems, we will apply it to problems that do not fulfill this assumption since
the convergence of the method is ensured by Theorem~\ref{th:main_theorem} in all cases.
We also note that the steps of the non-stationary method may be more costly than the one with constant stepsize, if the evaluation of the resolvents is costly and the constant stepsize can be leveraged to precompute something.
This is the case when $A$ and/or $B$ is linear and the resolvents involve the solution of a linear system for which a matrix factorization can be precomputed.
However, there are tricks to overcome this issue, see~\cite[pages 28-29]{boyd2011distributed}, but we will not go in more detail here.

For the Douglas-Rachford method we just provide illustrative examples since we are not aware of any adaptive rule that applies to the Douglas-Rachford method in the general case of monotone operators. For the ADMM there are several other adaptive rules available and we do a comparison in Section~\ref{sec:experiments-admm}.

\subsection{Experiments for non-stationary Douglas-Rachford}
\label{sec:experiments-dr}
We provide four numerical examples to illustrate the new adaptive DR scheme \eqref{eq:new_DR_scheme} on some well-studied problems in the literature.
The stepsizes~\eqref{eq:dg-adapt-stepsize-3} (in the case of single valued $B$) and \eqref{eq:weighted_stepsize} (in the case of multivalued $B$) come with new parameters: the safeguards $t_{\min/\max}$ and $\kappa_{\min/\max}$ and a ``conservation'' term $\omega_{n}$. Since $B$ is single valued is all experiments we always used~\eqref{eq:dg-adapt-stepsize-3} and we also fixed $t_{\min}=10^{-4}$, $t_{\max} = 10^{4}$ and $\omega_{n} = 2^{-n/100}$ for all experiments.

\subsubsection{The linear toy example}
We start with the linear toy example from Section~\ref{sec:motivating-example}. The residual sequence along the iterations is shown on the left of Figure~\ref{fig:motivating-example-adaptive-residual-stepsize}. 
\begin{figure}[htp!]
\begin{center}
\includegraphics[width=0.57\textwidth]{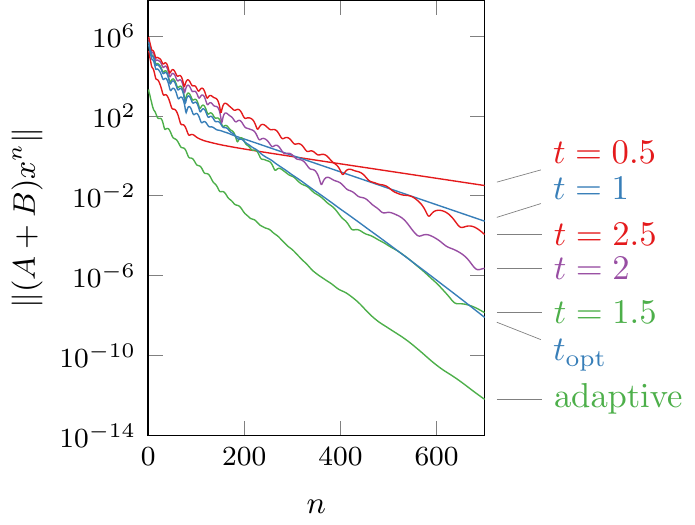}~%
\includegraphics[width=0.42\textwidth]{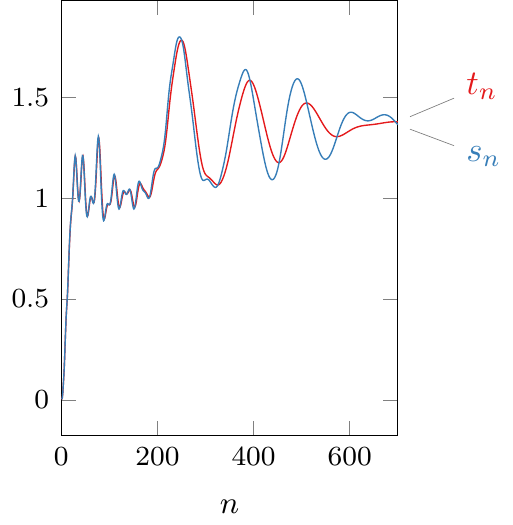}
\vspace{-3ex}
\caption{Results for the linear problem from Section~\ref{sec:motivating-example} using fixed and the adaptive stepsizes. 
Left: Residual sequences. Right: Auxiliary sequence $s_{n} = \norm{u^{n}}/\norm{Bu^{n}}$ and the stepsize $t_n$.}\label{fig:motivating-example-adaptive-residual-stepsize}
\vspace{-2ex}
\end{center}
\end{figure}
Additionally, we determined the stepsize $t_{\text{opt}}$ that leads to the smallest asymptotic convergence rate, i.e. to the smallest spectral radius of the iteration map $H_{t_{\text{opt}}}$  (in this case $t_{\text{opt}} = 1.367$) and also plot the corresponding residual sequence with this optimal constant stepsize in the same figure.
The adaptive stepsize does indeed improve the convergence considerably both by using small steps in the beginning and automatically tuning to a stepsize $t$ that is close to the optimal one
(cf.~Figure~\ref{fig:motivating-example-residual}, right). 
It also outperforms the optimal constant stepsize $t_{\mathrm{opt}}$.

\subsubsection{LASSO problems}
The LASSO problem is the minimization problem
\begin{equation}\label{eq:lasso}
\min_{x} \Big[ F(x) = \tfrac12\norm{Kx-b}_{2}^{2} + \alpha\norm{x}_{1} \Big]
\end{equation}
and is also known as basis pursuit denoising~\cite{tibshirani1996regression}.
We will treat this with the Douglas-Rachford method as follows: 
We set $F = f + g$ with 
\begin{align*}
g(x)& = \tfrac12\norm{Kx-b}_{2}^{2},& B&= \nabla g(x) = K^{T}(Kx-b)\\
f(x)& = \alpha\norm{x}_{1},& A &= \partial f(x).
\end{align*}
In this particular example we take $K\in\R^{100\times 1000}$ with orthonormal rows,
and hence, by the matrix inversion lemma, we get
\begin{equation*}
(I+tB)^{-1}x = (I+tK^{T}K)^{-1}(x + tK^{T}b) = (I - \tfrac{t}{t+1}K^{T}K)(x+tK^{T}B).
\end{equation*}
The resolvent of $A$ is the so-called soft-thresholding operator:
\begin{equation*}
(I+tA)^{-1}x = \max(\abs{x}-t\alpha,0)\sign(x).
\end{equation*}
Note that $B$ is single-valued and $A$ is a subgradient and hence, the adaptive stepsize $t_n$ computed by \eqref{eq:dg-adapt-stepsize-3} does apply.
Figure~\ref{fig:lasso} shows the result of the Douglas-Rachford iteration with constant and adaptive stepsizes, and also a comparison with the FISTA~\cite{beck2009fast} method. 
(Note that if $K$ would not have orthonormal rows, one would have to solve a linear system at each Douglas-Rachford step which would make the comparison with FISTA by iteration count unfair.)
As shown in this plot, the adaptive stepsize again automatically tunes to a stepsize close to $10$ which, experimentally, seems to be the best constant stepsize for this particular instance.

\begin{figure}[htp!]
\centering
\includegraphics[width=0.7\textwidth]{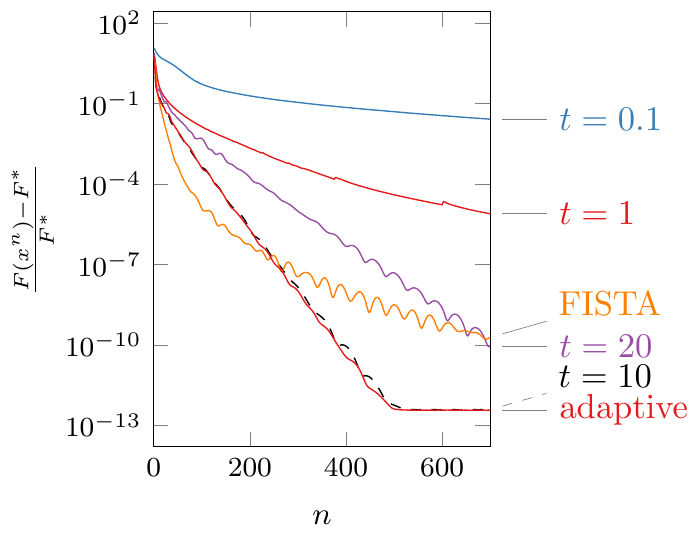}
\vspace{-1ex}
\caption{The convergence behavior of  the Douglas-Rachford iteration and  the FISTA method on a LASSO problem using fixed and the adaptive stepsizes.}\label{fig:lasso}
\vspace{-1ex}
\end{figure}

\subsubsection{Convex-concave saddle-point problems}
Let $X$ and $Y$ be two finite dimensional Hilbert spaces, $K:X\to Y$ be a bounded linear operator and $f:X\to\Rext$ and $g:Y\to\Rext$ be two proper, convex and lower-semicontinuous functionals. The saddle point problem then reads as
\begin{equation*}
\min_{x\in X}\max_{y\in Y}\Big\{ f(x) + \iprod{Kx,y} - g(y) \Big\}.
\end{equation*}
Saddle points $(x^{*},y^{*})$ are characterized by the inclusion
\begin{equation*}
0\in \begin{bmatrix}
  \partial f & K^{T}\\ -K  & \partial g
\end{bmatrix}
\begin{bmatrix} x^{*}\\y^{*}\end{bmatrix}.
\end{equation*}
To apply the Douglas-Rachford method we split the optimality system as follows. We denote $z=(x,y)$ and set
\begin{equation*}
A =\begin{bmatrix}
  \partial f & 0\\   0 & \partial g
\end{bmatrix},~~~~~
B = \begin{bmatrix}
  0 & K^{T}\\  -K & 0
\end{bmatrix},
\end{equation*}
(cf.~\cite{oconnor2014primal,bredies2015dr}). 
The operator $A$ is maximally monotone as a subgradient and $B$ is linear and skew-symmetric, hence maximally monotone and even continuous.

One standard problem in this class in the so-called
Rudin-Osher-Fatemi model for image denoising~\cite{rudin1992nonlinear}, also known as
total variation denoising. For a given noisy image $u_{0}\in\R^{M\times N}$ one seeks a
denoising image $u$ as the minimizer of
\begin{equation*}
\min_{u}\Big\{ \tfrac12\norm{u-u_{0}}_{2}^{2}+\lambda \norm{\abs{\nabla u}}_{1} \Big\},
\end{equation*}
where $\nabla u\in\R^{M\times N\times 2}$ denotes the discrete gradient of $u$ and $\abs{\nabla u}$ denotes the components-wise magnitude of this gradient. 
The penalty term $\norm{\abs{\nabla u}}_{1}$ is the discretized total variation, and $\lambda > 0$ is a regularization parameter.
The saddle point form of this minimization problem is
\begin{equation*}
\min_{u}\max_{\abs{\phi}\leq \lambda}\set{ \tfrac12\norm{u-u_{0}}_{2}^{2} + \iprod{\nabla u,\phi}}.
\end{equation*}
We test our DR scheme \eqref{eq:new_DR_scheme} using the adaptive stepsize $t_n$ and compare with two constant stepsizes $t=1$ and $t=13$.
The constant stepsize $t=13$ seems to be the best among many trial stepsizes after tuning. 
The convergence behavior of these cases is plotted in Figure \ref{fig:saddle-rof} for one particular image called \texttt{auge} of the size $256\times 256$.
As we can see from this figure that the adaptive stepsize has a good performance and is comparable with the best constant stepsize in this example ($t=13$).
\begin{figure}[htb]
  \centering
  \includegraphics[width=0.5\textwidth]{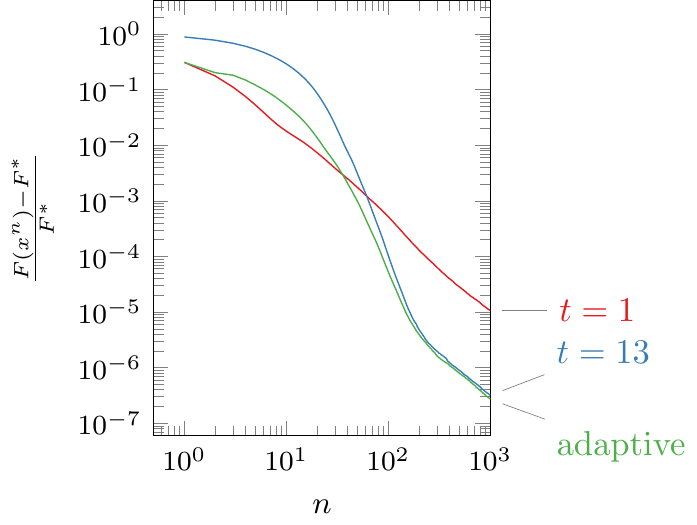}
\vspace{-2ex}    
  \caption{The decrease of the objective values of three DR variants in the total variation denoising problem.}\label{fig:saddle-rof}
\vspace{-2ex}  
\end{figure}

\subsection{Experiments for ADMM with an adaptive stepsize}
\label{sec:experiments-admm}

In this subsection we verify the performance of the our adaptive ADMM variant  \eqref{eq:DR_admm2}. We follow the comparison from~\cite{xu2017adaptive} where several adaptive variants of ADMM are compared. However, we only compare the methods of ADMM that do not involve relaxation, since we did not consider relaxation in this paper.

In our comparison we compare the ADMM with constant stepsize which is fixed ad-hoc, the adaptive rule of He~\cite{he2000alternating} which is based on residual balancing (RB), the adaptive ADMM (AADMM) from~\cite{xu2016adaptive} and our approach from Theorem~\ref{th:admm}. We used five different test problems from the comparison in~\cite{xu2017adaptive}: Elastic net regression, LASSO regression, quadratic programming, consensus $\ell^{1}$-regularized logistic regression, and SVM for classification (see~\cite[Section 6]{xu2017adaptive} for details).
We also use the code released online from~\cite{xu2016adaptive}.

Table~\ref{tab:results_admm} summarizes the results for average number of iterations for 50 runs on random instances of the same size. Note that both the RB ADMM and the AADMM do guarantee convergence only if the the adaptivity is switched of at a certain point while our rule comes with a convergence guarantee.\footnote{The paper~\cite{xu2017adaptive} has a convergence guarantee for an adaptive relaxed method, but this does not apply to the methods used in this comparison and is not included since it also involves relaxation.} Table~\ref{tab:results_admm} shows that our adaptive method consistently performs good.

\begin{figure}[hpt!] 
  \centering
  \subfigure[Elastic net]{
    \includegraphics[width=5cm]{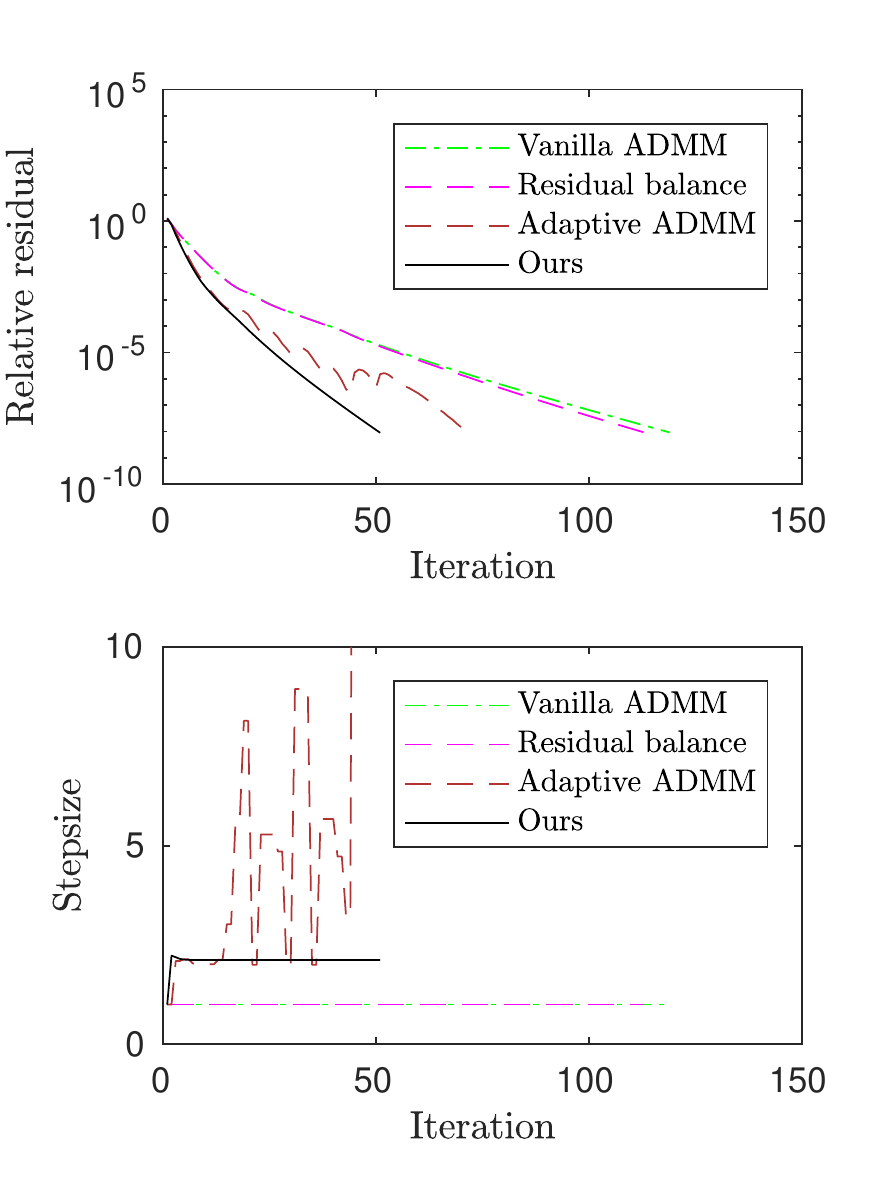}}
  \subfigure[LASSO]{
    \includegraphics[width=5cm]{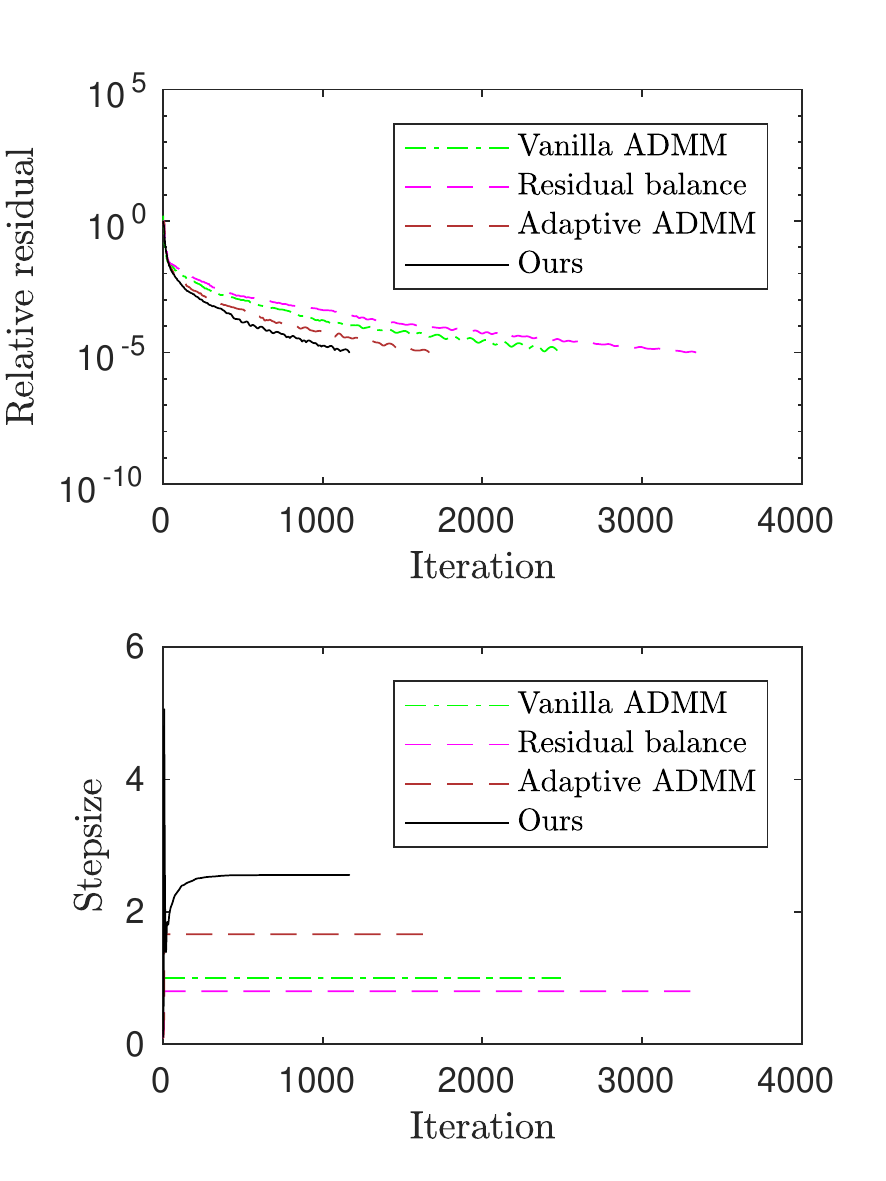}}
  \subfigure[Quadratic programming]{
    \includegraphics[width=5cm]{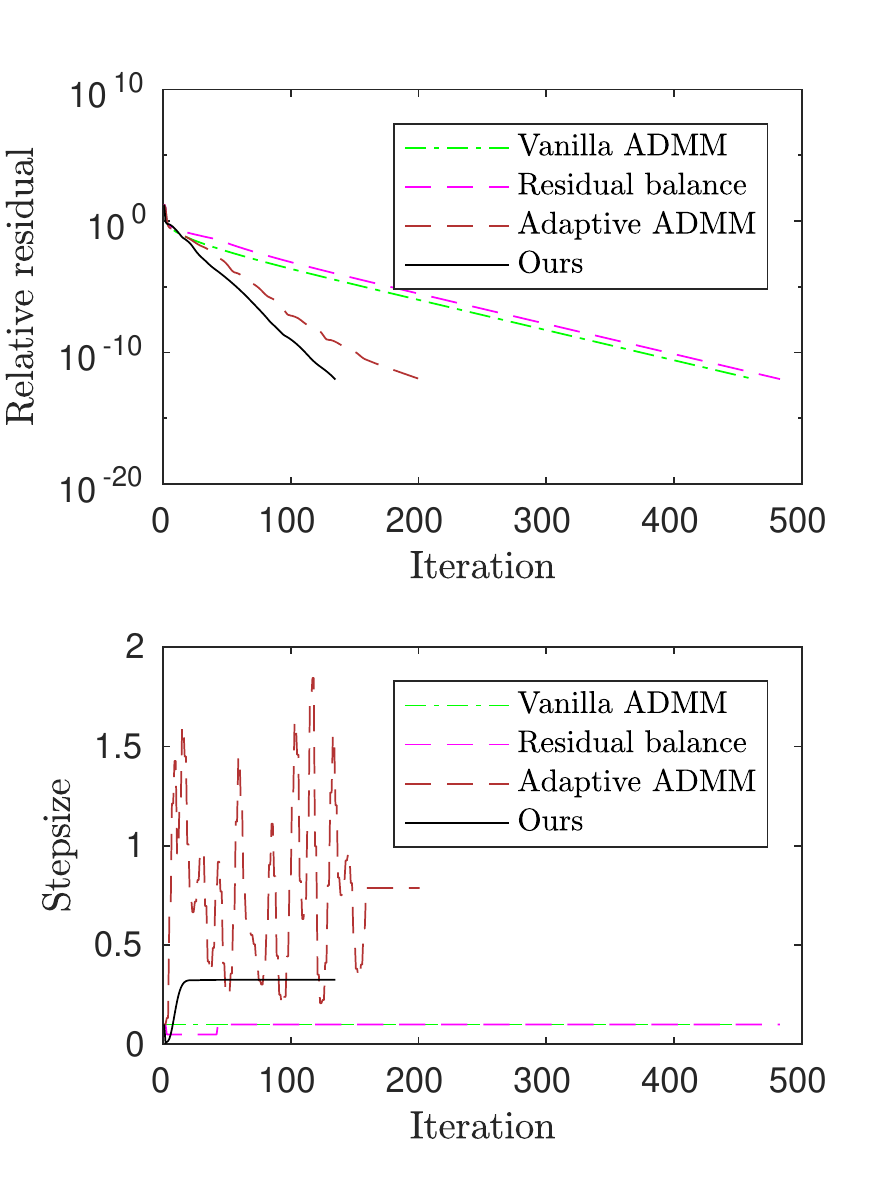}}
  \subfigure[Logistic regression]{
    \includegraphics[width=5cm]{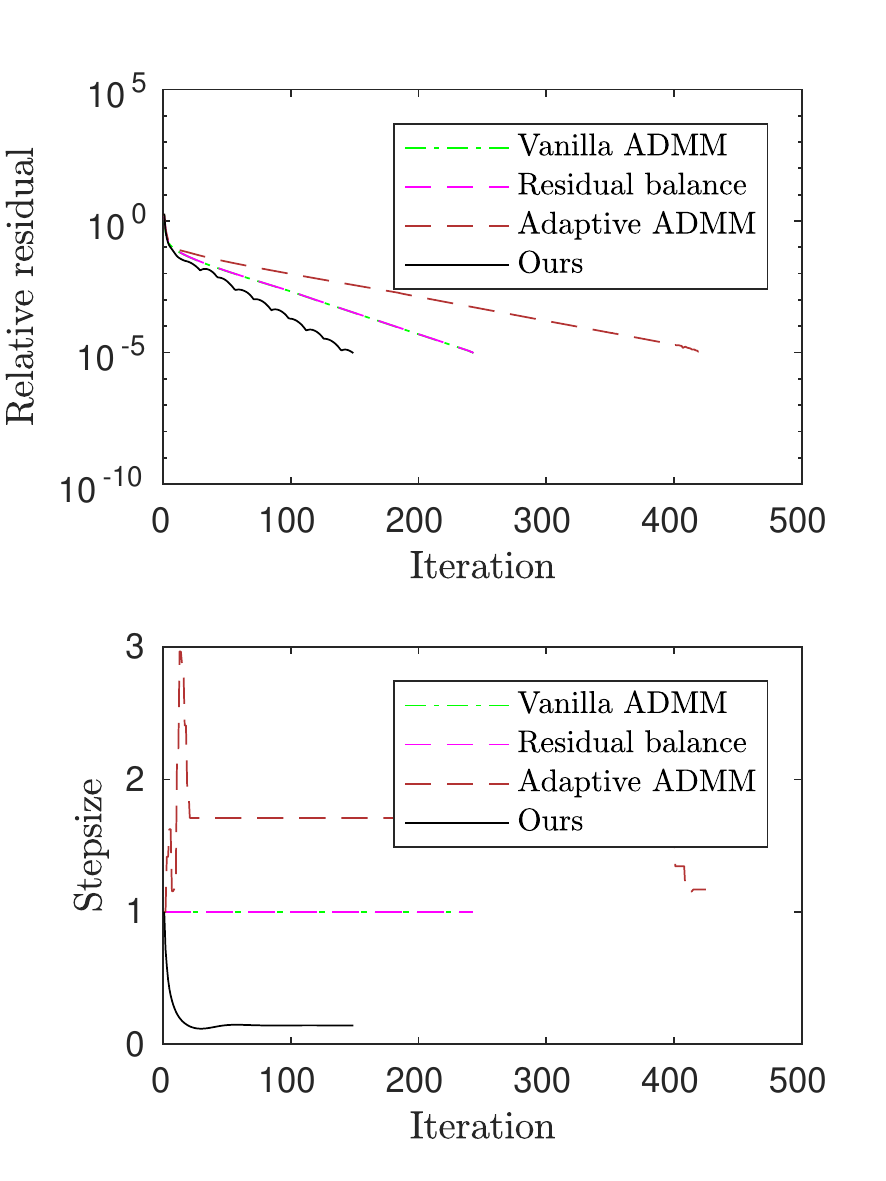}}
  \caption{Example runs of the different adaptive methods. Upper plots show the relative residual (cf.~\cite[Section 3.3.1]{boyd2011distributed} or~\cite[Section 4.3]{xu2016adaptive}), lower plots show the stepsizes, respectively.}
  \label{fig:admm_example_runs}
\end{figure}

\begin{table}[tb]
  \centering
\begin{tabular}{llr@{$\pm$}lr@{$\pm$}lr@{$\pm$}lr@{$\pm$}l}
  \toprule
   &\hspace*{6pt}& \multicolumn{2}{l}{Vanilla ADMM} & \multicolumn{2}{l}{RB ADMM} & \multicolumn{2}{l}{AADMM} & \multicolumn{2}{l}{Ours} \\\toprule
Elastic Net &&  $1198$&$ 145$ & $156 $&$ 18$ & $77 $&$ 15$  & $54 $&$  7$\\
LASSO &&   $1325$&$ 136$ & $1025 $&$ 319$ & $1351 $&$ 826$  & $650 $&$ 75$\\
QP &&   $420$&$ 49$ & $436 $&$ 44$ & $210 $&$ 33$  & $144 $&$ 14$\\
logreg &&   $273$&$ 85$ & $264 $&$ 93$ & $506 $&$ 358$  & $127 $&$ 35$\\
SVM &&  $1690$&$ 329$ & $2189 $&$ 1342$ & $1678 $&$ 1508$  & $878 $&$ 352$\\
  \bottomrule
\end{tabular} 
\caption{Results for the comparison of different apadtive stepsizes for ADMM. We compare the number of iterations needed for the methods to reach a given tolerance as in \cite{xu2016adaptive}. We report mean($\pm$ standard deviation) for 50 runs on random instances.}
\label{tab:results_admm}
\end{table}

Figure~\ref{fig:admm_example_runs} shows example runs for four of the five problem (the fifth being the SVM classification and is omitted due to space reasons). One observes that residual balancing often fails to make progress towards a favorable stepsize and that AADMM sometimes shows large oscillations in the stepsizes. Our method leads to a stepsize sequence that stabilizes quickly and leads to good reduction of the residual.

\section{Conclusion}\label{sec:conclusion}
We have attempted to address one fundamental practical issue in the well-known DR method: step-size selection.
This issue has been standing for a long time and has not adequately been well-understood.
In this paper, we have proposed an adaptive step-size that is derived from an observation of the linear case.
Our non-stationary DR method is new; it is derived from the iteration for single-valued $B$ and differs from the standard non-stationary iteration considered previously, e.g. in~\cite{Liang2017}.
Our stepsize remains heuristic in the general case, but we can guarantee a global convergence of the DR method.
As a byproduct, we have also derived a new ADMM variant that uses a simple adaptive stepsize and has a convergence guarantee.
This is practically significant since ADMM has been widely used in many areas in the last two decades.
Our finding also opens some future research ideas:
Although we gained some insight, the linear case is still not properly understood.
Since our heuristic applies to general $A$ and $B$, there is the possibility to investigate, which operators should be used as ``$B$'' to compute the adaptive stepsize.
As shown in~\cite{oconnor2014primal}, one can rescale convex-concave saddle point problems to use two different stepsizes for the Douglas-Rachford method, and one may extend our heuristic to this case.
Moreover, the convergence speed of the non-stationary method under additional assumptions such as Lipschitz continuity or coercivity could be analyzed.
Finally, an adaptive rule for the relaxed DR method would be of interest.

\subsection*{Acknowledgement}

We thank Zheng Xu and Tom Goldstein for sharing the ADMM code.

\appendix

\section{The proof of Theorem \ref{th:admm}}\label{apdx:th:admm}
Let us assume that we apply \eqref{eq:new_DR_scheme} to solve the optimality condition \eqref{eq:opt_dual_cvx} of the dual problem \eqref{eq:dual_cvx}.
From \eqref{eq:new_DR_scheme}, i.e., 
\begin{equation*}
y^{n+1} = J_{t_{n}A}( (1+\kappa_n)J_{t_{n-1}B}y^{n} - \kappa_n y^{n}) + \kappa_n\left( y^{n} -J_{t_{n-1}B}y^{n}\right),
\end{equation*}
we define $w^{n+1} := J_{t_{n-1}B}y^{n}$ and $z^{n+1} := J_{t_{n}A}( (1 + \kappa_n)w^{n+1} - \kappa_ny^n)$ to obtain
\begin{equation*}
\begin{cases}
w^{n+1} &:= J_{t_{n-1}B}y^{n}\\ 
z^{n+1} &:= J_{t_{n}A}((1 + \kappa_n)w^{n+1} - \kappa_n y^n) \\ 
y^{n+1} &= z^{n+1} + \kappa_n(y^n- w^{n+1}).
\end{cases}
\end{equation*}
Shifting up this scheme by one index and changing the order, we obtain
\begin{equation*}
\begin{cases}
z^{n} &= J_{t_{n-1}A}((1 + \kappa_{n-1})w^{n} - \kappa_{n-1}y^{n-1}) \\ 
y^{n} &= z^{n} + \kappa_{n-1}(y^{n-1} - w^{n}) \\ 
w^{n+1} &= J_{t_{n-1}B}y^n = J_{t_{n-1}B}\left(z^n + \kappa_{n-1}(y^{n-1} - w^n)\right).
\end{cases}
\end{equation*}
Let $(1 + \kappa_{n-1})w^{n} - \kappa_{n-1}y^{n-1} = x^n + w^n$. 
This gives $x^n = \kappa_{n-1}(w^n - y^{n-1})$
and hence, $z^n + \kappa_{n-1}(y^{n-1} - w^n) = z^n - x^n$ and $x^{n+1} = \kappa_n(w^{n+1} - y^{n}) =  \kappa_n(w^{n+1} - z^n + x^n)$.
Substituting these into the above expression of the DR scheme, we obtain 
\begin{equation}\label{eq:DR_iteration4}
\left\{\begin{array}{ll}
z^{n} &= J_{t_{n-1}A}(x^{n} + w^n)\\
w^{n+1} &= J_{t_{n-1}B}(z^{n} - x^n)\\
x^{n+1} &= \kappa_n(x^n + w^{n+1} - z^{n}),
\end{array}\right.
\end{equation}
where $x^n = \kappa_{n-1}(w^{n} - y^{n-1})$.

From $z^{n} = J_{t_{n-1}A}(w^{n} + x^n)$, we have $z^n = (I + t_{n-1}A)^{-1}(w^n + x^n)$ or 
\begin{equation*}
0 \in z^n - w^n - x^n + t_{n-1}(D\nabla{\varphi^{*}}(D^Tz^n) -  c).
\end{equation*}
Let $u^{n+1} \in \nabla{\varphi^{\ast}}(D^Tz^n)$, which implies $D^Tz^n \in \partial{\varphi}(u^{n+1})$. 
Hence, we have $z^n - w^n - x^n + t_{n-1}(Du^{n+1} - c) = 0$, therefore  $D^Tz^n = D^T(w^n + x^n - t_{n-1}(Du^{n+1} - c)) \in \partial{\varphi}(u^{n+1})$. This condition leads to 
\begin{equation*}
0 \in D^T(t_{n-1}(Du^{n+1} - c) - x^n - w^n) + \partial{\varphi}(u^{n+1}).
\end{equation*}
This is the optimality condition of 
\begin{equation*}
u^{n+1} = \argmin_u\set{ \varphi(u) + \frac{t_{n-1}}{2}\Vert Du - c - t_{n-1}^{-1}(x^n + w^n)\Vert^2}.
\end{equation*}
Similarly, from $w^{n+1} = J_{t_{n-1}B}(z^n - x^n)$, if we define $v^{n+1} \in \nabla{\psi^{\ast}}(E^Tw^{n+1})$, 
then we can also derive that
\begin{equation*}
v^{n+1} = \argmin_v\set{ \psi(v) + \frac{t_{n-1}}{2}\Vert Ev + t_{n-1}^{-1}(x^n - z^n)\Vert^2}.
\end{equation*}
From the line $z^n - w^n - x^n + t_{n-1}(Du^{n+1} - c) = 0$ above, we can write $x^n - z^n = t_{n-1}(Du^{n+1} - c) - w^n$. 
Substituting this expression into the above step, we obtain
\begin{equation*}
\begin{array}{ll}
v^{n+1} 
&= \displaystyle\argmin_v\set{ \psi(v) - \iprods{w^n, Ev} + \tfrac{t_{n-1}}{2}\Vert Ev + Du^{n+1} - c\Vert^2}.
\end{array}
\end{equation*}
This is the second line of \eqref{eq:DR_admm2}.

Next, from $w^{n+1} - z^n + x^n + t_{n-1}Ev^{n+1} = 0$, we have $w^{n} = z^{n-1} - x^{n-1} - t_{n-2}Ev^{n}$. 
This implies $Ev^n = -t_{n-2}^{-1}(x^{n-1} + w^n - z^{n-1})$.
From the last line of \eqref{eq:DR_iteration4}, we have $x^n = \kappa_{n-1}(x^{n-1} + w^n - z^{n-1})$.
Combine these two lines, we get $Ev^n = -\tfrac{1}{\kappa_{n-1}t_{n-2}}x^n = -\frac{1}{t_{n-1}}x^n$ due to the update rule \eqref{eq:adaptive_stepsize2}: $t_{n-1} = \kappa_{n-1}t_{n-2}$.
Substituting $Ev^n  = -\frac{1}{t_{n-1}}x^n$ into the $u$-subproblem, we obtain
\begin{equation*}
\begin{array}{ll}
u^{n+1} 
&= \argmin_u\set{ \varphi(u) - \iprods{w^n, Du} + \frac{t_{n-1}}{2}\Vert Du + Ev^n - c\Vert^2}.
\end{array}
\end{equation*}
This is the first line of \eqref{eq:DR_admm2}.

Now, since $z^n = w^n - t_{n-1}(Du^{n+1} - c) + x^n$, and $w^{n+1} = z^n - x^n - t_{n-1}Ev^{n+1}$, combining these expressions, we obtain $w^{n+1} = w^n - t_{n-1}(Du^{n+1} + Ev^{n+1} - c)$.
This is the last line of \eqref{eq:DR_admm2}.

Finally, we derive the update rule for $t_n$.
Indeed, note that $y^n = z^n - x^n$, and $z^n - w^n - x^n + t_{n-1}(Du^{n+1} - c) = 0$. 
These relations show that $y^n = w^n - t_{n-1}(Du^{n+1} - c)$.
Moreover, we also have $w^{n+1} = J_{t_{n-1}B}(z^n - x^n) = J_{t_{n-1}B}(y^n)$.
In this case, we have $J_{t_{n-1}B}(y^n) - y^n = w^{n+1} - w^n + t_{n-1}(Du^{n+1} - c) = -t_{n-1}(Du^{n+1} + Ev^{n+1} - c) + t_{n-1}(Du^{n+1} - c) = -t_{n-1}Ev^{n+1}$.
Hence, we can compute $\kappa_n$ as
\begin{equation*}
\kappa_n := \frac{\norm{J_{t_{n-1}B}(y^n)}}{\norm{y^n - J_{t_{n-1}B}(y^n)}} = \frac{\norm{w^{n+1}}}{t_{n-1}\norm{Ev^{n+1}}}.
\end{equation*}
Using the fact that $t_n := \kappa_nt_{n-1}$, we show that $t_n := \frac{\norm{w^{n+1}}}{\norm{Ev^{n+1}}}$, which is the last line of \eqref{eq:DR_admm2} after projecting and weighting as in Section \ref{sec:adaptive-stepsize}.
Since $\set{w^n}$ is equivalent to the sequence $\set{u^n}$ in the DR scheme \eqref{eq:DR-iter-1} (or equivalently, \eqref{eq:new_DR_scheme}) applying to the dual optimality condition \eqref{eq:opt_dual_cvx} of the dual problem \eqref{eq:dual_cvx}, the last conclusion is a direct consequence of Theorem~\ref{th:main_theorem}.
\Eproof

\bibliographystyle{plain}
\bibliography{ref}

\end{document}